\theoremstyle{plain}
\newtheorem{thm}{Theorem}
\newtheorem{lem}[thm]{Lemma}
\newtheorem{prop}[thm]{Proposition}
\newtheorem{conj}[thm]{Conjecture}
\newtheorem{claim}[thm]{Claim}
\newtheorem{prob}[thm]{Problem}
\theoremstyle{definition}
\newtheorem{defn}[thm]{Definition}
\newtheorem{cons}[thm]{Construction}
\newtheorem{ex}[thm]{Example}
\newtheorem{rem}[thm]{Remark}
\newtheoremstyle{case}{}{}{\normalfont}{}{\itshape}{\normalfont:}{ }{}
\theoremstyle{case}
\newtheorem{case}{Case}
\title{On Saturated $k$-Sperner Systems}
\author{Natasha Morrison}
\author{Jonathan A. Noel}
\author{Alex Scott}
\affil{\normalsize{Mathematical Institute, University of Oxford, Woodstock Road, Oxford OX2 6GG, UK.}}
\affil{\texttt{\{morrison,noel,scott\}@maths.ox.ac.uk}}
\newcommand{\sml}[1]{\mathcal{#1}^{\text{{\rm small}}}}
\newcommand{\lrg}[1]{\mathcal{#1}^{\text{{\rm large}}}}
\newcommand{\sat}{\operatorname{sat}}
\newcommand{\wsat}{\operatorname{osat}}
\begin{document}

\maketitle

\begin{abstract}
Given a set $X$, a collection $\mathcal{F}\subseteq\mathcal{P}(X)$ is said to be \emph{$k$-Sperner} if it does not contain a chain of length $k+1$ under set inclusion and it is \emph{saturated} if it is maximal with respect to this property. Gerbner et al.~\cite{gerb} conjectured that, if $|X|$ is sufficiently large with respect to $k$, then the minimum size of a saturated $k$-Sperner system $\mathcal{F}\subseteq\mathcal{P}(X)$ is $2^{k-1}$. We disprove this conjecture by showing that there exists $\varepsilon>0$ such that for every $k$ and $|X| \geq n_0(k)$ there exists a saturated $k$-Sperner system $\mathcal{F}\subseteq\mathcal{P}(X)$ with cardinality at most $2^{(1-\varepsilon)k}$. 

A collection $\mathcal{F}\subseteq \mathcal{P}(X)$ is said to be an \emph{oversaturated $k$-Sperner system} if, for every $S\in\mathcal{P}(X)\setminus\mathcal{F}$, $\mathcal{F}\cup\{S\}$ contains more chains of length $k+1$ than $\mathcal{F}$. Gerbner et al.~\cite{gerb} proved that, if $|X|\geq k$, then the smallest such collection contains between $2^{k/2-1}$ and $O\left(\frac{\log{k}}{k}2^k\right)$ elements. We show that if $|X|\geq k^2+k$, then the lower bound is best possible, up to a polynomial factor. 

\end{abstract}

\section{Introduction}

Given a set $X$, a collection $\mathcal{F}\subseteq \mathcal{P}(X)$ is a \emph{Sperner system} or an \emph{antichain} if there do not exist $A,B\in\mathcal{F}$ such that $A\subsetneq B$. More generally, a collection $\mathcal{F}\subseteq \mathcal{P}(X)$ is a \emph{k-Sperner system} if there does not exist a subcollection $\{A_1,\dots,A_{k+1}\}\subseteq\mathcal{F}$ such that $A_1\subsetneq \dots\subsetneq A_{k+1}$. Such a subcollection $\{A_1,\dots,A_{k+1}\}$ is called a \emph{$(k+1)$-chain}. We say that a $k$-Sperner system is \emph{saturated} if, for every $S\in\mathcal{P}(X)\setminus \mathcal{F}$, we have that $\mathcal{F}\cup\{S\}$ contains a $(k+1)$-chain. A collection $\mathcal{F}\subseteq \mathcal{P}(X)$ is an \emph{oversaturated $k$-Sperner system}\footnote{In~\cite{gerb}, this is called a \emph{weakly saturated $k$-Sperner system}. Since there is another notion of weak saturation in the literature (see, for instance, 
Bollob\'{a}s~\cite{weakly}), we have chosen to use a different term to avoid possible 
confusion.} if, for every $S\in\mathcal{P}(X)\setminus\mathcal{F}$, we have that the number of $(k+1)$-chains in $\mathcal{F}\cup\{S\}$ is greater than the number of $(k+1)$-chains in $\mathcal{F}$. Thus, $\mathcal{F}\subseteq\mathcal{P}(X)$ is a saturated $k$-Sperner system if and only if it is an oversaturated $k$-Sperner system that does not contain a $(k+1)$-chain.

For a set $X$ of cardinality $n$, the problem of determining the maximum size of a saturated $k$-Sperner system in $\mathcal{P}(X)$ is well understood. In the case $k=1$, Sperner's Theorem~\cite{Sperner} (see also~\cite{WhiteBook}), says that every antichain in $\mathcal{P}(X)$ contains at most $\binom{n}{\left\lfloor n/2\right\rfloor}$ elements, and this bound is attained by the collection consisting of all subsets of $X$ with cardinality $\left\lfloor n/2\right\rfloor$. Erd\H{o}s~\cite{LittlewoodOfford} generalised Sperner's Theorem by proving that the largest size of a $k$-Sperner system in $\mathcal{P}(X)$ is the sum of the $k$ largest binomial coefficients $\binom{n}{i}$. In this paper, we are interested in determining the minimum size of a saturated $k$-Sperner system or an oversaturated $k$-Sperner system in $\mathcal{P}(X)$. These problems were first studied by Gerbner, Keszegh, Lemons, Palmer, P{\'a}lv{\"o}lgyi and Patk{\'o}s~\cite{gerb}.

Given integers $n$ and $k$, let $\sat(n,k)$ denote the minimum size of a saturated $k$-Sperner system in $\mathcal{P}(X)$ where $|X|=n$. It was shown in~\cite{gerb} that $\sat(n,k)=\sat(m,k)$ if $n$ and $m$ are sufficiently large with respect to $k$. We can therefore define
\[\sat(k):=\lim_{n\to\infty}\sat(n,k).\]
We are motivated by the following conjecture of~\cite{gerb}.

\begin{conj}[Gerbner et al.~\cite{gerb}]
\label{mainconj}
For all $k$, $\sat(k)=2^{k-1}$. 
\end{conj}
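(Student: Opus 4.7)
The conjecture has two directions, and the upper bound $\sat(k) \leq 2^{k-1}$ should be the easier one: exhibit an explicit saturated $k$-Sperner system of size $2^{k-1}$. A natural candidate is to fix an auxiliary chain $\emptyset = B_0 \subsetneq B_1 \subsetneq \cdots \subsetneq B_{k-1} \subsetneq X$ and take $\mathcal{F}$ to be a $2^{k-1}$-size "doubled chain" (for each gap $B_{i-1} \subsetneq B_i$, allow a binary choice of a small perturbation), arranged so that every $S \notin \mathcal{F}$ either strictly refines a length-$k$ chain it already shares the right endpoints with, or strictly extends one, thereby producing a $(k+1)$-chain. The combinatorics here is routine; the point is to establish a benchmark matching the conjectured value.

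The crux is the matching lower bound $\sat(k) \geq 2^{k-1}$, and my first instinct would be induction on $k$ with target inequality $\sat(k) \geq 2\sat(k-1)$. The base $k=1$ is clear, since $\{\emptyset\}$ is a saturated antichain. For the inductive step, given a saturated $k$-Sperner system $\mathcal{F}\subseteq\mathcal{P}(X)$ and a carefully chosen $x \in X$, split $\mathcal{F} = \mathcal{F}^+ \cup \mathcal{F}^-$ by membership of $x$, and consider the projections $\{A \setminus \{x\} : A \in \mathcal{F}^+\}$ and $\mathcal{F}^-$ in $\mathcal{P}(X \setminus \{x\})$. If one could arrange that each of these is a saturated $(k-1)$-Sperner system on the smaller ground set, induction would immediately give $|\mathcal{F}| \geq 2 \cdot 2^{k-2} = 2^{k-1}$. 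Alternatively, an LYM/profile-vector argument, weighting each $A \in \mathcal{F}$ by a reciprocal count of maximal chains through it and exploiting saturation to force the sum of weights to be at least $1$, might also yield the bound.

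The hard part will be exactly the reduction. Saturation of $\mathcal{F}$ is witnessed by $(k+1)$-chains that may \emph{mix} sets from $\mathcal{F}^+$ and $\mathcal{F}^-$, so a set missing from one side need not create a short chain on that side, only a long chain jointly with the other. Consequently the two projections can individually be very far from saturated, and repairing the gap seems to require either an especially clever choice of $x$ (for instance one minimising some imbalance statistic) or a genuinely global entropy/counting argument. Any slack of even a constant factor in such a reduction destroys the bound, since $2^{k-1}$ is exponential. In light of the exponential fragility and the abstract's announcement that $2^{(1-\varepsilon)k}$ saturated systems exist, I would expect this obstacle to be fundamental rather than technical: the honest outcome of this plan is likely not a proof of the conjecture but a construction disproving it, presumably by exploiting precisely the mixed chains that defeat the inductive split.
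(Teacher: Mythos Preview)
The statement you are attempting is a \emph{conjecture}, and the paper does not prove it --- it disproves it. So there is no ``paper's proof'' to match; any complete proof attempt must fail, and you correctly diagnose this in your final paragraph. The paper confirms $\sat(k)=2^{k-1}$ only for $k\le 5$ (Proposition~\ref{k<=5}) and exhibits, for every ground set of size at least $8$, a saturated $6$-Sperner system of cardinality $30<2^5$ (Proposition~\ref{k=6}); iterating a product construction (Lemma~\ref{comblem}) then gives $\sat(k)\le 2^{(1-\varepsilon)k}$ for all $k$.

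Two specific comments on your write-up. First, your upper-bound sketch is needlessly vague: the paper's Construction~\ref{powerset} is much cleaner than a ``doubled chain with perturbations''. One simply takes a set $Y$ of size $k-2$, a disjoint nonempty $H$, and sets $\mathcal{G}=\mathcal{P}(Y)\cup\{S\cup H:S\in\mathcal{P}(Y)\}$; saturation is immediate because any $T\notin\mathcal{G}$ satisfies $\emptyset\neq T\cap H\neq H$, so $T$ slots strictly between $T\setminus H$ and $T\cup H$, each of which lies on a full chain of $\mathcal{G}$. Second, your instinct about the lower bound is exactly right, and for the right reason: the inductive split $\mathcal{F}=\mathcal{F}^+\cup\mathcal{F}^-$ by membership of a point cannot force both projections to be saturated $(k-1)$-Sperner, because witnesses to saturation genuinely mix the two sides. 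The paper's counterexample exploits precisely this, building antichains $\mathcal{A}_2,\mathcal{A}_3$ whose small sets interlock so that no single coordinate cleanly separates the structure. Your LYM-style alternative would at best recover the $2^{k/2-1}$ lower bound already in~\cite{gerb}, not $2^{k-1}$.
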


Gerbner et al.~\cite{gerb} observed that their conjecture is true for $k=1,2,3$. They also proved that $2^{k/2-1}\leq \sat(k)\leq 2^{k-1}$ for all $k$, where the upper bound is implied by the following construction.

\begin{cons}[Gerbner et al.~\cite{gerb}]
\label{powerset}
Let $Y$ be a set such that $|Y| = k-2$ and let $H$ be a non-empty set disjoint from $Y$. Let $X = Y \cup H$ and define 
\[\mathcal{G}:=\mathcal{P}(Y)\cup\{S\cup H: S\in\mathcal{P}(Y)\}.\]
It is easily verified that $\mathcal{G} \subseteq \mathcal{P}(X)$ is a saturated $k$-Sperner system of cardinality $2^{k-1}$.
\end{cons}

In this paper, we disprove Conjecture~\ref{mainconj} by establishing the following:

\begin{thm}
\label{epsilon}
There exists $\varepsilon>0$ such that, for all $k$, $\sat(k) \le 2^{(1-\varepsilon)k}$.
\end{thm}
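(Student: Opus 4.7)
The plan is to construct, for each $k$ (with $|X|$ sufficiently large), a saturated $k$-Sperner system $\mathcal{F}$ with $|\mathcal{F}| \leq 2^{(1-\varepsilon)k}$ for some fixed $\varepsilon > 0$, improving on Construction~\ref{powerset}. A first observation is that Construction~\ref{powerset} is \emph{locally tight}: removing any set $S$ from $\mathcal{G}$ breaks saturation, since $(\mathcal{G} \setminus \{S\}) \cup \{S\} = \mathcal{G}$ has maximum chain length only $k$ and hence no $(k+1)$-chain. So any improvement must come from a structurally different construction.

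A natural candidate is a multi-level generalisation of Construction~\ref{powerset}. Take $X = Y \cup H$; fix a chain $\emptyset = H_0 \subsetneq H_1 \subsetneq \dots \subsetneq H_t = H$ in $\mathcal{P}(H)$; at each level $i$ choose a sub-family $\mathcal{A}_i \subseteq \mathcal{P}(Y)$; and define $\mathcal{F} := \bigcup_{i=0}^{t} \{A \cup H_i : A \in \mathcal{A}_i\}$. Since $\mathcal{F}$ sits inside the full multi-level family, it is automatically $k$-Sperner whenever $|Y| + t + 1 \leq k$. The hope is that the additional chain length in the $H$-direction (namely $t > 1$) provides enough flexibility for each horizontal sub-family $\mathcal{A}_i$ to be a proper subset of $\mathcal{P}(Y)$ while still yielding saturation.

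To verify saturation, for each $S = A \cup B \not\in \mathcal{F}$ (parameterised by $A = S \cap Y$ and $B = S \cap H$) one must construct a $(k+1)$-chain in $\mathcal{F} \cup \{S\}$ by concatenating: a chain in $\mathcal{F}$ below $S$ (using sets $A' \cup H_j$ with $A' \in \mathcal{A}_j$, $A' \subseteq A$, and $H_j \subseteq B$); the set $S$ itself; and a chain in $\mathcal{F}$ above $S$ (using sets $A' \cup H_j$ with $A' \in \mathcal{A}_j$, $A' \supseteq A$, and $H_j \supseteq B$). The crux is that, provided $t \geq 2$, the chain is not forced to use any single level exclusively, so the $\mathcal{A}_i$'s can share the saturation work across levels and need not individually cover all of $\mathcal{P}(Y)$.

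The main obstacle is finding sub-families $\mathcal{A}_i$ that simultaneously satisfy saturation for every $S$ and have small total size $\sum_i |\mathcal{A}_i|$. Saturation for $S$ with ``generic'' $(A, B)$ tends to force each $\mathcal{A}_i$ to retain substantial chain structure, while the size constraint pushes in the opposite direction; in particular, the naive choice $\mathcal{A}_i = \mathcal{P}(Y)$ at every level merely reproduces the bound $2^{k-1}$. A suitable explicit construction, perhaps using a threshold rule such as $\mathcal{A}_i = \{A \subseteq Y : |A| \geq f(i)\}$ or a more intricate hierarchical design, should yield the theorem with some explicit $\varepsilon > 0$ after careful case analysis of the saturation conditions.
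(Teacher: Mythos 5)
Your proposal never produces a construction, and, more seriously, the multi-level framework you set up cannot produce one. Suppose $t\geq 2$, $|Y|+t+1\leq k$, and $\mathcal{F}=\bigcup_{i=0}^{t}\{A\cup H_i:A\in\mathcal{A}_i\}$. Pick $g\in H_t\setminus H_{t-1}$, let $B:=\{g\}$, and let $A\subseteq Y$ be arbitrary; set $S:=A\cup B$. Since $t\ge 2$, $H_j\neq\{g\}$ for every $j$, so $S\notin\mathcal{F}$. Now any $A'\cup H_j\in\mathcal{F}$ with $A'\cup H_j\subsetneq S$ must have $H_j\subseteq B=\{g\}$, and since $H_1\neq\emptyset$ and $g\notin H_1$ this forces $j=0$; any $A'\cup H_j\supsetneq S$ must have $H_j\supseteq\{g\}$, which forces $j=t$. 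Thus every chain in $\mathcal{F}\cup\{S\}$ through $S$ consists of a chain of subsets of $A$ from $\mathcal{A}_0$, then $S$, then a chain (shifted by $H$) of supersets of $A$ from $\mathcal{A}_t$, and hence has at most $(|A|+1)+1+(|Y\setminus A|+1)=|Y|+3\leq k-t+2\leq k$ members. So adding $S$ never creates a $(k+1)$-chain and $\mathcal{F}$ is not saturated, \emph{regardless of how the $\mathcal{A}_i$ are chosen}. The extra levels in $H$ do not help because the ``diagonal'' sets $S$, whose trace on $H$ is incomparable with the interior $H_j$'s, never see them; this is precisely why Construction~\ref{powerset} treats $H$ as a single homogeneous block (the case $t=1$). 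Your opening observation that removing any element from $\mathcal{G}$ breaks saturation is correct but holds trivially for \emph{every} saturated $k$-Sperner system, so it carries no information about Construction~\ref{powerset} specifically.

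The actual proof is structurally different: it stays with a single homogeneous block (so $t=1$ in your notation) and gets the savings from the internal structure of $\sml{F}$ and $\lrg{F}$. Proposition~\ref{k=6} exhibits a saturated $6$-Sperner system with a homogeneous set and $|\sml{F}|=|\lrg{F}|=15$, built by assembling a layered sequence of saturated antichains via Lemmas~\ref{layer} and~\ref{cont}. Lemma~\ref{comblem} then shows that for saturated $k_i$-Sperner systems $\mathcal{F}_i$ on disjoint ground sets, with homogeneous sets and containing $\emptyset$ and the full set, the family
\[\mathcal{G}:=\{A\cup B: A\in\sml{F}_1,\ B\in\sml{F}_2\}\cup\{S\cup T: S\in\lrg{F}_1,\ T\in\lrg{F}_2\}\]
is a saturated $(k_1+k_2-2)$-Sperner system of size $|\sml{F}_1||\sml{F}_2|+|\lrg{F}_1||\lrg{F}_2|$. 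Iterating on $j$ copies of the base case gives $\sat(4j+2)\leq 2\cdot 15^j$, and the remaining residues are handled via $\sat(k)\leq 2\sat(k-1)$, giving $\varepsilon$ just below $1-\tfrac{1}{4}\log_2 15$. If you want to push your framework to a genuine proof, you should abandon the idea of lengthening the chain in $H$ and instead look for structure inside $\mathcal{A}_0$ and $\mathcal{A}_1$ for a fixed homogeneous block; the product structure of Lemma~\ref{comblem} is one way to do that.
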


We remark that the value of $\varepsilon$ that can be deduced from our proof is approximately $\left(1 - \frac{\log_{2}\left(15\right)}{4}\right)\approx 0.023277$. The proof of Theorem~\ref{epsilon} comes in two parts. First, we give an infinite family of saturated $6$-Sperner systems of cardinality $30$ which shows that $\sat(6) \leq 30< 2^5$. We then provide a method which, under certain conditions, allows us to combine a saturated $k_1$-Sperner system of small order and a saturated $k_2$-Sperner system of small order to obtain a saturated $(k_1+k_2-2)$-Sperner system of small order. By repeatedly applying this method, we are able to prove Theorem~\ref{epsilon} for general $k$. As it turns out, our method yields the bound $\sat(k) < 2^{k-1}$ for every $k\geq6$. For completeness, we will prove that $\sat(k)=2^{k-1}$ for $k\leq 5$, and so $k=6$ is the first value of $k$ for which Conjecture~\ref{mainconj} is false.

Similar techniques show that $\sat(k)$ satisfies a submultiplicativity condition, which leads to the following result.

\begin{thm}\label{submult}
For $\varepsilon$ as in Theorem~\ref{epsilon}, there exists $c\in[1/2,1-\varepsilon]$ such that $\sat(k) = 2^{(1+o(1))ck}$.
\end{thm}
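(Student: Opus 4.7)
My plan is to extract a submultiplicativity inequality of the form $\sat(k_1+k_2-2)\leq \sat(k_1)\cdot\sat(k_2)$ (valid for all $k_1,k_2$ at least some threshold $k_0$) from the construction behind Theorem~\ref{epsilon}, and then apply Fekete's lemma. Concretely, given minimum-size saturated $k_i$-Sperner systems $\mathcal{F}_i\subseteq\mathcal{P}(X_i)$ on disjoint ground sets, I would run the same gluing recipe that fuses a saturated $k_1$-system and a saturated $k_2$-system into a saturated $(k_1+k_2-2)$-system in the proof of Theorem~\ref{epsilon}, and check that the result has cardinality at most $|\mathcal{F}_1|\cdot|\mathcal{F}_2|$.

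Granting this, set $g(k):=\log_2\sat(k)$ and $h(m):=g(m+2)$. Substituting $k_i=m_i+2$ turns the inequality into $h(m_1+m_2)\leq h(m_1)+h(m_2)$ for all sufficiently large $m_1,m_2$, so $h$ is (eventually) subadditive. Fekete's lemma then yields
\[c:=\lim_{m\to\infty}\frac{h(m)}{m}=\inf_{m\geq m_0}\frac{h(m)}{m}.\]
Since $h(m)/m=g(m+2)/m$ and $m/(m+2)\to 1$, this is equivalent to $g(k)/k\to c$, i.e.\ $\sat(k)=2^{(1+o(1))ck}$. The lower bound $\sat(k)\geq 2^{k/2-1}$ from~\cite{gerb} gives $g(k)/k\geq 1/2-1/k$, so $c\geq 1/2$, while Theorem~\ref{epsilon} gives $g(k)\leq(1-\varepsilon)k$, so $c\leq 1-\varepsilon$.

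The main obstacle is the submultiplicativity inequality itself. A naive disjoint-union product $\{A\cup B:A\in\mathcal{F}_1,\,B\in\mathcal{F}_2\}$ has cardinality $|\mathcal{F}_1|\cdot|\mathcal{F}_2|$ as desired, but its longest chain has length $k_1+k_2-1$, one more than is permitted in a $(k_1+k_2-2)$-Sperner system; so the construction must identify or suppress an extremal level of one of the factors, and then both the chain bound and the saturation condition must be reverified for the trimmed family. I expect this to match the combination lemma underlying Theorem~\ref{epsilon} and to go through essentially unchanged, with the only additional bookkeeping being to track that the cardinality bound $|\mathcal{F}_1|\cdot|\mathcal{F}_2|$ (rather than, say, $|\mathcal{F}_1|+|\mathcal{F}_2|$) can be achieved.
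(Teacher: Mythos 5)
Your approach is essentially the paper's: establish that the combining lemma (Lemma~\ref{comblem}) yields $\sat(k_1+k_2-2)\le\sat(k_1)\sat(k_2)$ (via Remark~\ref{sizes}, since $|\sml{F}_1||\sml{F}_2|+|\lrg{F}_1||\lrg{F}_2|\le|\mathcal{F}_1||\mathcal{F}_2|$), feed it through Fekete's lemma, and read off the bounds on $c$ from the known estimates on $\sat(k)$. The one place you genuinely deviate is how you remove the ``$-2$'' offset: you substitute $h(m):=\log_2\sat(m+2)$ to get plain subadditivity $h(m_1+m_2)\le h(m_1)+h(m_2)$, whereas the paper instead applies inequality~(\ref{2sat}) twice to upgrade to $\sat(k+\ell)\le 4\sat(k)\sat(\ell)$ and then sets $f(k):=4\sat(k)$ so that $f$ is multiplicatively submultiplicative. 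Both are correct and elementary; your index shift is arguably tidier since it avoids the extraneous factor of $4$, while the paper's phrasing keeps the statement in terms of $\sat$ itself. One small caution: to invoke Lemma~\ref{comblem} you also need the minimizing systems $\mathcal{F}_i$ to contain $\{\emptyset,X_i\}$ and to have homogeneous sets; the paper supplies these via Lemma~\ref{emptyfull} and Claim~\ref{bighom} (on a sufficiently large ground set), so be sure to cite those. Also, your description of the combining step as ``suppressing an extremal level'' isn't quite what happens --- Lemma~\ref{comblem} restricts the product to small$\times$small and large$\times$large pairs rather than deleting a layer --- but this doesn't affect the cardinality bound you need.
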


Naturally, we wonder about the correct value of $c$ in Theorem~\ref{submult}.

\begin{prob}
\label{problem}
Determine the constant $c$ for which $\sat(k) = 2^{(1+o(1))ck}$.
\end{prob}

We are also interested in oversaturated $k$-Sperner systems. Given integers $n$ and $k$, let $\wsat(n,k)$ denote the minimum size of an oversaturated $k$-Sperner system in $\mathcal{P}(X)$ where $|X|=n$. As we will prove in Lemma~\ref{satk}, $\wsat(n,k)=\wsat(m,k)$ provided that $n$ and $m$ are sufficiently large with respect to $k$. Similarly to $\sat(k)$, we define $\wsat(k):=\lim_{n\to \infty}\wsat(n,k)$. Gerbner et al.~\cite{gerb} proved that if $|X|\geq k$, then an oversaturated $k$-Sperner system in $\mathcal{P}(X)$ of minimum size has between $2^{k/2 - 1}$ and $O\left(\frac{\log(k)}{k}2^k\right)$ elements. Together with Lemma~\ref{satk}, this implies
\[2^{k/2-1}\leq \wsat(k)\leq O\left(\frac{\log(k)}{k}2^k\right).\]
We show that the lower bound gives the correct asymptotic behaviour, up to a polynomial factor. 

\begin{thm}\label{wsatb}
For every integer $k$ and set $X$ with $|X|\geq k^2+k$ there exists an oversaturated $k$-Sperner system $\mathcal{F}\subseteq \mathcal{P}(X)$ such that $|\mathcal{F}| = O\left(k^5 2^{k/2}\right)$. In particular, 
\[\wsat(k) = 2^{(1/2 + o(1))k}.\]
\end{thm}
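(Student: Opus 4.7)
The plan is to prove the asymptotic $\wsat(k) = 2^{(1/2+o(1))k}$ by establishing the upper bound via an explicit construction, and combining it with the lower bound $\wsat(k) \geq 2^{k/2-1}$ of Gerbner et al.~\cite{gerb} (transferred to the limit by Lemma~\ref{satk}). All the work therefore lies in producing, for every $k$ and every $X$ with $|X| \geq k^2 + k$, an oversaturated $k$-Sperner system $\mathcal{F} \subseteq \mathcal{P}(X)$ of size $O(k^5 2^{k/2})$.

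I would begin with a reduction to the case $|X| = k^2+k$. Given an oversaturated $k$-Sperner system $\mathcal{F}_0 \subseteq \mathcal{P}(X_0)$ on a subset $X_0 \subseteq X$ with $|X_0| = k^2+k$, in which every set belongs to a $(k+1)$-chain in $\mathcal{F}_0$, and writing $X_1 = X \setminus X_0$, the family $\mathcal{F} = \mathcal{F}_0 \cup \{A \cup X_1 : A \in \mathcal{F}_0\}$ has size at most $2|\mathcal{F}_0|$ and is oversaturated in $\mathcal{P}(X)$. This is verified by case analysis on $S \in \mathcal{P}(X)\setminus \mathcal{F}$: if $S \subseteq X_0$, oversaturation of $\mathcal{F}_0$ directly gives a $(k+1)$-chain through $S$; if $X_1 \subseteq S$, the shifted copy does; and if $\emptyset \subsetneq S \cap X_1 \subsetneq X_1$ we combine a chain of subsets of $S\cap X_0$ in $\mathcal{F}_0$ with a chain of sets $B\cup X_1$ for $B \in \mathcal{F}_0$ with $B \supseteq S\cap X_0$.

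To build $\mathcal{F}_0$ on $X_0$ of size $k^2+k$, I would partition $X_0 = Y_1 \cup \cdots \cup Y_k$ with $|Y_i| = k+1$, fix a reference maximal chain $\emptyset = C_{i,0} \subsetneq \cdots \subsetneq C_{i,k+1} = Y_i$ in each $Y_i$, and assemble $\mathcal{F}_0$ from a \emph{backbone} piece and a \emph{decoration} piece. The backbone consists of $2^{k/2}$ sets obtained by pairing the blocks into $k/2$ pairs and making a single binary choice per pair (include the pair in full, or not). The decoration piece attaches to each backbone set a polynomial-in-$k$ number of initial segments of reference chains drawn from unused blocks, together with polynomially many shifts of each reference chain inside its block, designed so that every $S$ can be sandwiched between adjacent backbone levels by decorations. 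Verification of oversaturation would then proceed by case analysis on the block profile $(|S\cap Y_i|)_{i=1}^k$ and on how $S\cap Y_i$ compares with the $C_{i,j}$'s; in each case one produces $j\in\{0,\dots,k\}$ and a chain $A_1\subsetneq\cdots\subsetneq A_j\subsetneq S\subsetneq A_{j+1}\subsetneq\cdots\subsetneq A_k$ in $\mathcal{F}_0$ witnessing a new $(k+1)$-chain through $S$.

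The hardest part is the off-reference case, in which $S\cap Y_i$ is incomparable to every $C_{i,j}$ in many blocks, so that no natural subset or superset of $S$ is available in those blocks from the reference chain alone. This is where the hypothesis $|Y_i|=k+1$ (hence $|X_0|=k^2+k$) is used: the extra element per block gives just enough room to include in $\mathcal{F}_0$ a polynomial number of shifted reference chains per block (for example, $k+1$ cyclic shifts), so that every $S\cap Y_i$ aligns with at least one shifted reference chain. Balancing the $2^{k/2}$ backbone against the polynomial shifts per block and the polynomial decoration choices, the total size works out to $O(k^5 2^{k/2})$, matching the target and completing the proof.
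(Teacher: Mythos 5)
Your plan diverges from the paper's at the crucial step: the paper constructs $\mathcal{F}_t$ and $\mathcal{G}_t$ \emph{probabilistically} (random subsets of $X$ of carefully chosen sizes $ak$ and $(1-a)k$, with $a$ depending on $t$, followed by a first-moment union bound over all $\binom{k^2+k}{t}$ sets of size $t$), then grows a short chain below each $F$ and above each $X\setminus G$ so that $|F|+|G|\geq k$ guarantees a $(k+1)$-chain. Your outer reduction to $|X|=k^2+k$ by doubling across $X\setminus X_0$ matches the paper's final step, and your strategy of sandwiching each $S$ between a subset in some $\mathcal{F}_t$ and a complement of a member of some $\mathcal{G}_t$ is also in the same spirit. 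But the core of your argument — an explicit ``backbone plus decoration'' construction with cyclic shifts — is never actually specified, and the step you yourself flag as hardest is where it breaks.

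Concretely, the claim that ``every $S\cap Y_i$ aligns with at least one shifted reference chain'' after taking $k+1$ cyclic shifts of one maximal chain in a block $Y_i$ of size $k+1$ cannot be right under any reasonable reading of ``aligns.'' There are $2^{k+1}$ possible sets $S\cap Y_i$, while the cyclic shifts of a single chain produce only $O(k^2)$ intervals; most $S\cap Y_i$ will contain no cyclic interval of size $|S\cap Y_i|-1$ and be contained in no cyclic interval of size $|S\cap Y_i|+1$ (already for $|Y_i|=6$ the set $\{1,3,5\}$ contains no cyclic interval of size $2$). So the mechanism you invoke to rescue the off-reference case does not supply, for a typical $S$, either a strict subset or a strict superset of $S$ in your decorated family restricted to a block. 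Without that, the case analysis on block profiles you promise cannot be carried out, and there is no route from your description to a $(k+1)$-chain through $S$. The size bound $O(k^5 2^{k/2})$ is likewise asserted, not derived, and depends on details of the decoration that were never fixed. To make an argument of this shape work, you would need either to replace the cyclic shifts with a covering family strong enough to hit all subsets of each block from below and above at the right scales (which, if done naively, blows up the size), or to abandon the explicit construction in favour of the probabilistic one as in the paper, where the tension between ``enough random sets to cover all $S$ of size $t$'' and ``few enough to keep the total at $2^{k/2}\mathrm{poly}(k)$'' is resolved by the precise choice of $a=a(t)$.
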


In Section~\ref{prelims}, we prove some preliminary results which will be used throughout the paper. In particular, we provide conditions under which a saturated $k$-Sperner system can be decomposed into or constructed from a sequence of $k$ disjoint saturated antichains. In Section~\ref{comb} we show that certain types of saturated $k_1$-Sperner and $k_2$-Sperner systems can be combined to produce a saturated $(k_1+k_2-2)$-Sperner system, and use this to prove Theorems~\ref{epsilon} and~\ref{submult}. Finally, in Section~\ref{wsat}, we give a probabilistic construction of oversaturated $k$-Sperner systems of small cardinality, thereby proving Theorem~\ref{wsatb}.

Minimum saturation has been studied extensively in the context of graphs~\cite{completepartite,BollobasEHM,C5,Kst,hypercube,KaszTuz,C4,Wessel,Wessel2} and  hypergraphs~\cite{runif,OlegHyper2,rstars,OlegHyper}. Such problems are typically of the following form: for a fixed (hyper)graph $H$, determine the minimum size of a (hyper)graph $G$ on $n$ vertices which does not contain a copy of $H$ and for which adding any edge $e \notin G$, yields a (hyper)graph which contains a copy of $H$. This line of research was first initiated by Zykov~\cite{Zykov} and Erd\H{o}s, Hajnal and Moon~\cite{EHM}. For more background on minimum saturation problems for graphs, we refer the reader to the survey of Faudree, Faudree and Schmitt~\cite{satsurvey}.

\section{Preliminaries}
\label{prelims}

Given a collection $\mathcal{F}\subseteq \mathcal{P}(X)$, we say that a set $A\subseteq X$ is an \emph{atom} for $\mathcal{F}$ if $A$ is maximal with respect to the property that
\begin{equation}\label{hom} \text{for every set $S\in\mathcal{F}$, $S\cap A\in\{\emptyset,A\}$.}\end{equation}
We say that an atom $A$ with $|A|\geq2$ is \emph{homogeneous} for $\mathcal{F}$. Gerbner et al.~\cite{gerb} proved that if $n,m$ are sufficiently large with respect to $k$, then $\sat(n,k)=\sat(m,k)$. Using a similar approach, we extend this result to $\wsat(n,k)$. 

\begin{lem}
\label{satk}
Fix $k$. If $n,m> 2^{2^{k-1}}$, then $\sat(n,k)=\sat(m,k)$ and $\wsat(n,k)=\wsat(m,k)$.
\end{lem}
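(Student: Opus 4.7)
My plan is to reduce the Lemma to the claim that $\sat(n,k) = \sat(n+1,k)$ and $\wsat(n,k) = \wsat(n+1,k)$ whenever $n > 2^{2^{k-1}}$; iterating then yields the full statement. The starting point is that any minimum (over)saturated $k$-Sperner system $\mathcal{F}$ on a set $X$ satisfies $|\mathcal{F}| \leq \wsat(|X|,k) \leq \sat(|X|,k) \leq 2^{k-1}$, the last inequality by Construction~\ref{powerset}. Since the atoms of $\mathcal{F}$ are the equivalence classes of the membership pattern $x \mapsto \{S \in \mathcal{F} : x \in S\}$, there are at most $2^{|\mathcal{F}|} \leq 2^{2^{k-1}}$ of them, so whenever $|X| > 2^{2^{k-1}}$, pigeonhole furnishes a homogeneous atom $A$ with $|A| \geq 2$.

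For the inequality $\sat(n,k) \leq \sat(n+1,k)$ (and analogously for $\wsat$), I would start with a minimum system $\mathcal{F}$ on a set $X$ of size $n+1$ containing such a homogeneous atom $A$, fix $a \in A$, and define $\mathcal{F}' := \{S \setminus \{a\} : S \in \mathcal{F}\} \subseteq \mathcal{P}(X \setminus \{a\})$. The atom property of $A$ makes $S \mapsto S \setminus \{a\}$ a poset isomorphism onto $\mathcal{F}'$, so $|\mathcal{F}'| = |\mathcal{F}|$ and all chain counts are preserved; the condition $|A| \geq 2$ is exactly what is needed to ensure strict inclusions survive (if $a$ witnessed $S_1 \subsetneq S_2$, then some $a' \in A \setminus \{a\}$ does too). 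For (over)saturation of $\mathcal{F}'$, given $T' \notin \mathcal{F}'$, one checks that $T' \notin \mathcal{F}$ (viewing $T'$ as a subset of $X$, since $a \notin T'$), applies (over)saturation of $\mathcal{F}$ to produce a $(k+1)$-chain through $T'$, and projects via the isomorphism.

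For the reverse inequality $\sat(n+1,k) \leq \sat(n,k)$, I would take a minimum $\mathcal{F}$ on $X$ of size $n$ with homogeneous atom $A$, introduce a fresh $a^* \notin X$, pick $a \in A$, and form $\mathcal{F}^+$ by adding $a^*$ to each $S \in \mathcal{F}$ containing $a$ (and leaving the rest unchanged). Then $A \cup \{a^*\}$ is an atom of $\mathcal{F}^+$ of size at least $3$, and $S \mapsto S^+$ is a poset isomorphism. For $T \in \mathcal{P}(X \cup \{a^*\}) \setminus \mathcal{F}^+$ that respects $A \cup \{a^*\}$, a chain lifts directly from $\mathcal{F}$. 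For $T$ that splits $A \cup \{a^*\}$ properly, I would set $\hat T^\bot := T \setminus (A \cup \{a^*\}) \subseteq X$, pick any $\emptyset \subsetneq B \subsetneq A$, and apply (over)saturation of $\mathcal{F}$ to $\hat M := \hat T^\bot \cup B$, which lies outside $\mathcal{F}$ since it splits $A$. In the resulting chain $\hat U_1 \subsetneq \cdots \subsetneq \hat U_j = \hat M \subsetneq \cdots \subsetneq \hat U_{k+1}$, the atom property of $A$ forces $\hat U_i \cap A = \emptyset$ for $i < j$ and $A \subseteq \hat U_i$ for $i > j$. Lifting through $+$ and substituting $T$ for $\hat M$ yields a $(k+1)$-chain $\hat U_1^+ \subsetneq \cdots \subsetneq \hat U_{j-1}^+ \subsetneq T \subsetneq \hat U_{j+1}^+ \subsetneq \cdots \subsetneq \hat U_{k+1}^+$ in $\mathcal{F}^+ \cup \{T\}$.

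The main obstacle is verifying the sandwich step $\hat U_{j-1}^+ \subsetneq T \subsetneq \hat U_{j+1}^+$ with strict inclusions: this reduces to the observation that $\hat U_{j-1}^+$ is disjoint from $A \cup \{a^*\}$ while $T$ meets it non-trivially, and $\hat U_{j+1}^+$ contains $A \cup \{a^*\}$ while $T$ does not (so the set-difference contains an element of $A \cup \{a^*\}$ outside $T$). The edge cases $j = 1$ and $j = k+1$, where one end of the sandwich is absent, are handled the same way by placing $T$ at the corresponding end of the chain.
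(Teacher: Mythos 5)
Your proposal is correct and takes essentially the same approach as the paper: both find a homogeneous atom by the pigeonhole argument on membership patterns, then delete an element from it (to shrink the ground set) or duplicate it via a fresh element (to expand), observing that the resulting map is a poset isomorphism and that (over)saturation transfers by routing the new set through the homogeneous atom. The only cosmetic difference is that the paper constructs the size-$(n-1)$ and size-$(n+1)$ systems from a single minimum system on a ground set of size $n$, whereas you argue the two inequalities from two separate minimum systems; the underlying constructions and verifications are the same.
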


\begin{proof}
Fix $n>2^{2^{k-1}}$ and let $X$ be a set of cardinality $n$. Suppose that $\mathcal{F}\subseteq \mathcal{P}(X)$ is an oversaturated $k$-Sperner system of cardinality at most $2^{k-1}$. We know that such a family exists by Construction \ref{powerset}. We will show that, for sets $X_1$ and $X_2$ such that $|X_1|=n-1$ and $|X_2|=n+1$, there exists $\mathcal{F}_1\subseteq \mathcal{P}(X_1)$ and $\mathcal{F}_2\subseteq \mathcal{P}(X_2)$ such that
\begin{enumerate}[(a)]
\item\label{samecard} $|\mathcal{F}_1|=|\mathcal{F}_2|=|\mathcal{F}|$,
\item \label{samechains} $\mathcal{F}_1$ and $\mathcal{F}_2$ have the same number of $(k+1)$-chains as $\mathcal{F}$,
\item\label{stillwsat} $\mathcal{F}_1$ and $\mathcal{F}_2$ are oversaturated $k$-Sperner systems.
\end{enumerate}
We observe that this is enough to prove the lemma. Indeed, by taking $\mathcal{F}$ to be a saturated $k$-Sperner system or an oversaturated $k$-Sperner system in $\mathcal{P}(X)$ of minimum order, we will have that 
\[\max\{\sat(n-1,k), \sat(n+1,k)\} \leq \sat(n,k)\text{ and}\]
\[\max\{\wsat(n-1,k), \wsat(n+1,k)\} \leq \wsat(n,k).\]
Since $n$ was an arbitrary integer greater than $2^{2^{k-1}}$, the result will follow by induction. 

We prove the following claim.

\begin{claim}
\label{bighom}
Given a set $X$ and a collection $\mathcal{F}\subseteq\mathcal{P}(X)$, if $|X|>2^{|\mathcal{F}|}$, then there is a homogeneous set for $\mathcal{F}$.
\end{claim}

\begin{proof}
We observe that every atom $A$ for $\mathcal{F}$ corresponds to a subcollection $\mathcal{F}_A:=\{S\in \mathcal{F}: A\subseteq S\}$ of $\mathcal{F}$ such that $\mathcal{F}_A\neq \mathcal{F}_{A'}$ whenever $A\neq A'$. This implies that the number of atoms for $\mathcal{F}$ is at most $2^{|\mathcal{F}|}$. Therefore, since $|X|> 2^{|\mathcal{F}|}$, there must be a homogeneous set $H$ for $\mathcal{F}$. 
\end{proof}

By Claim~\ref{bighom} and the fact that $|X|>2^{2^{k-1}}\geq2^{|\mathcal{F}|}$, there exists a homogeneous set $H$ for $\mathcal{F}$. Let $x_1\in H$ and $x_2\notin X$ and define $X_1 := X\setminus\{x_1\}$ and $X_2 := X\cup\{x_2\}$. Let 
\[\mathcal{F}_1 := \{S\in \mathcal{F}: S\cap H = \emptyset\} \cup \{S\setminus\{x_1\}: S\in \mathcal{F}_H\},\text{ and}\]
\[\mathcal{F}_2 := \{S\in \mathcal{F}: S\cap H = \emptyset\} \cup \{S\cup\{x_2\}: S\in \mathcal{F}_H\}.\]
Since $H$ is homogeneous for $\mathcal{F}$, there does not exist a pair of sets in $\mathcal{F}$ which differ only on $x_1$. Thus, for $i\in\{1,2\}$ there is a natural bijection from $\mathcal{F}_i$ to $\mathcal{F}$ which preserves set inclusion. Hence, (\ref{samecard}) and (\ref{samechains}) hold. Now, let $i\in\{1,2\}$ and $T_i\in\mathcal{P}(X_i)\setminus\mathcal{F}_i$ and define
\[T:=\left(T_i\setminus (H\cup\{x_2\})\right)\cup\{x_1\}.\]
Then $T\in\mathcal{P}(X)\setminus\mathcal{F}$ since $H$ is a non-singleton atom and $T\cap H=\{x_1\}$, and so there exists $A_1,\dots,A_k\in\mathcal{F}$ and $t\in\{0,\dots,k\}$ such that
\[A_1\subsetneq\dots\subsetneq A_t\subsetneq T\subsetneq A_{t+1}\subsetneq \dots\subsetneq A_k.\]
Since $T\cap H\neq H$, we must have $A_j\cap H=\emptyset$ for $j\leq t$ and so $A_1,\dots,A_t\in\mathcal{F}_i$ and $A_1\subsetneq\dots\subsetneq A_t\subsetneq T_i$. Also, since $T\cap H\neq \emptyset$, we have $A_j\cap H=H$ for $j\geq t+1$. Setting $A_j':=(A_j\cup\{x_2\})\cap X_i$, we see that $A_j'\in\mathcal{F}_i$ for $j\geq t+1$ and that $T_i\subsetneq A_{t+1}'\subsetneq \dots\subsetneq A_k'$. Thus, (\ref{stillwsat}) holds. 
\end{proof}

The rest of the results of this section are concerned with the structure of saturated $k$-Sperner systems. The next lemma, which is proved in~\cite{gerb}, implies that for any saturated $k$-Sperner system there can be at most one homogeneous set. We include a proof for completeness.

\begin{lem}[Gerbner et al.~\cite{gerb}]
\label{block}
If $\mathcal{F}\subseteq \mathcal{P}(X)$ is a saturated $k$-Sperner system and $H_1$ and $H_2$ are homogeneous for $\mathcal{F}$, then $H_1=H_2$.
\end{lem}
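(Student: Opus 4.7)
The plan is to argue by contradiction: assume $H_1 \neq H_2$ are both homogeneous for $\mathcal{F}$. The first step I would carry out is to verify that distinct atoms must be disjoint. Indeed, if atoms $A_1 \neq A_2$ shared some element $x$, then for every $S \in \mathcal{F}$ the atom condition~(\ref{hom}) applied to each $A_i$ gives $A_1 \subseteq S \iff x \in S \iff A_2 \subseteq S$, so $A_1 \cup A_2$ also satisfies~(\ref{hom}), contradicting the maximality of $A_1$. Hence $H_1 \cap H_2 = \emptyset$.

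The next step is to locate a set $S \in \mathcal{F}$ containing exactly one of $H_1, H_2$. If no such $S$ existed, the same reasoning as above would show that $H_1 \cup H_2$ satisfies~(\ref{hom}), again contradicting the maximality of $H_1$. Without loss of generality, fix $S \in \mathcal{F}$ with $H_1 \subseteq S$ and $S \cap H_2 = \emptyset$, and pick $h_1 \in H_1$, $h_2 \in H_2$ (possible since $|H_1|,|H_2|\geq 2$).

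Now consider $T := (S \setminus \{h_1\}) \cup \{h_2\}$. Since $T \cap H_1 = H_1 \setminus \{h_1\}$ is a non-empty proper subset of $H_1$, we have $T \notin \mathcal{F}$. By saturation, $\mathcal{F} \cup \{T\}$ contains a $(k+1)$-chain, which must use $T$:
\[A_1 \subsetneq \cdots \subsetneq A_t \subsetneq T \subsetneq A_{t+1} \subsetneq \cdots \subsetneq A_k, \qquad A_j \in \mathcal{F}.\]
For $j \leq t$, the intersections $A_j \cap H_1 \subseteq T \cap H_1 = H_1\setminus\{h_1\}$ and $A_j \cap H_2 \subseteq \{h_2\}$ are both proper subsets of the corresponding homogeneous set, so homogeneity forces $A_j \cap H_1 = A_j \cap H_2 = \emptyset$; hence $A_j \subseteq T \setminus (H_1 \cup H_2) = S \setminus H_1 \subsetneq S$. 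For $j \geq t+1$, $A_j \supseteq T$ contains the non-empty sets $H_1\setminus\{h_1\}$ and $\{h_2\}$, so homogeneity gives $A_j \supseteq H_1 \cup H_2$; combined with $A_j \supseteq T$ this yields $A_j \supseteq S \cup H_2 \supsetneq S$. Substituting $S$ for $T$ therefore produces
\[A_1 \subsetneq \cdots \subsetneq A_t \subsetneq S \subsetneq A_{t+1} \subsetneq \cdots \subsetneq A_k,\]
a $(k+1)$-chain entirely in $\mathcal{F}$, contradicting the $k$-Sperner property.

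The part of the plan requiring the most care is the precise construction of $T$. A naive choice such as $T = S \cup \{h_2\}$ still yields $A_j \subseteq S$ for $j \leq t$, but leaves open the degenerate possibility that $S$ coincides with some $A_j$, spoiling the strict insertion. Removing $h_1$ in addition to adding $h_2$ forces $A_j \cap H_1 = \emptyset$, and therefore $A_j \subsetneq S$ strictly, for every $j \leq t$; this is exactly what guarantees that $S$ can be substituted for $T$ to produce a genuine $(k+1)$-chain.
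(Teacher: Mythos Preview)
Your proof is correct and follows essentially the same approach as the paper's: the construction of $T=(S\setminus\{h_1\})\cup\{h_2\}$, the observation that $T\notin\mathcal{F}$, and the substitution of $S$ for $T$ in the resulting $(k+1)$-chain are identical. Your preliminary verification that distinct atoms are disjoint is an extra (and correct) observation that the paper omits, since the maximality of $H_1$ already forces the existence of the separating set $S$ directly.
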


\begin{proof}
Suppose to the contrary that $H_1$ and $H_2$ are homogeneous for $\mathcal{F}$ and that $H_1\neq H_2$. Then, since each of $H_1$ and $H_2$ are maximal with respect to (\ref{hom}), we have that $H_1\cup H_2$ is not homogeneous for $\mathcal{F}$. Therefore, there is a set $S\in\mathcal{F}$ which contains some, but not all, of $H_1\cup H_2$. Without loss of generality, we have $S\cap H_1=H_1$ and $S\cap H_2=\emptyset$ since $H_1$ and $H_2$ are homogeneous for $\mathcal{F}$. Now, pick $x\in H_1$ and $y\in H_2$ arbitrarily and define
\[T:=(S\setminus\{x\})\cup\{y\}.\]
Clearly $T$ cannot be in $\mathcal{F}$ since $T\cap H_1=H_1\setminus\{x\}$ and $H_1$ is homogeneous for $\mathcal{F}$. Since $\mathcal{F}$ is saturated, there must exist sets $A_1,\dots,A_k\in\mathcal{F}$ and $t\in\{0,\dots,k\}$ such that 
\[A_1\subsetneq \dots\subsetneq A_t\subsetneq T\subsetneq A_{t+1} \subsetneq \dots \subsetneq A_k.\]
Since $H_1$ and $H_2$ are homogeneous for $\mathcal{F}$, and neither $H_1$ nor $H_2$ is contained in $T$, we get that $A_t\subsetneq T\setminus(H_1\cup H_2) \subseteq S$. Similarly, $A_{t+1}\supsetneq S$. However, this implies that $\{A_1,\dots,A_k\}\cup\{S\}$ is a $(k+1)$-chain in $\mathcal{F}$, a contradiction. 
\end{proof}

By Lemma~\ref{block}, if $\mathcal{F}$ is a saturated $k$-Sperner system for which there exists a homogeneous set, then the homogeneous set must be unique. Throughout the paper, it will be useful to distinguish the elements of $\mathcal{F}$ which contain the homogeneous set from those that do not. 

\begin{defn}
Let $\mathcal{F}\subseteq\mathcal{P}(X)$ be a saturated $k$-Sperner system and let $H$ be homogeneous for $\mathcal{F}$.  We say that a set $S\in \mathcal{F}$ is \emph{large} if $H\subseteq S$ or \emph{small} if $S\cap H=\emptyset$. Let $\lrg{F}$ and $\sml{F}$ denote the collection of large and small sets of $\mathcal{F}$, respectively. Thus, $\mathcal{F}=\sml{F}\cup\lrg{F}$. 
\end{defn}

\begin{lem}
\label{thisorthat}
Let $\mathcal{A}\subseteq\mathcal{P}(X)$ be a saturated antichain with homogeneous set $H$. Then every set $S\in \mathcal{P}(X)\setminus\mathcal{A}$ either contains a set in $\sml{A}$ or is contained in a set of $\lrg{A}$. 
\end{lem}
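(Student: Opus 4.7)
The plan is to split into three cases based on $S\cap H$, using that every $A\in\mathcal{A}$ satisfies $A\cap H\in\{\emptyset,H\}$ so that $\mathcal{A}=\sml{A}\cup\lrg{A}$. The ``easy'' case will be $\emptyset\neq S\cap H\neq H$; the remaining cases $H\subseteq S$ and $S\cap H=\emptyset$ will each be reduced to the easy case by perturbing $S$ on a single element of $H$.

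In the easy case, $S\notin\mathcal{A}$ holds automatically (since every member of $\mathcal{A}$ has intersection with $H$ equal to $\emptyset$ or $H$), so by saturation some $A\in\mathcal{A}$ is strictly comparable to $S$. A small $A$ cannot strictly contain $S$ (because $A\cap H=\emptyset$ while $S\cap H\neq\emptyset$), and a large $A$ cannot be strictly contained in $S$ (because $H\subseteq A$ while $H\not\subseteq S$); the two remaining possibilities are exactly the desired conclusions.

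To handle $H\subseteq S$, I would pick any $x\in H$ (available since $|H|\geq 2$ by the definition of a homogeneous set) and apply the easy case to $S':=S\setminus\{x\}$, whose intersection with $H$ is the proper non-empty subset $H\setminus\{x\}$. If the easy case produces a small $A$ with $A\subsetneq S'$, then also $A\subsetneq S$; if it produces a large $A$ with $S'\subsetneq A$, then $x\in H\subseteq A$ gives $A\supseteq S'\cup\{x\}=S$, and $A\neq S$ since $S\notin\mathcal{A}$. The case $S\cap H=\emptyset$ is dual: set $S':=S\cup\{x\}$ and note that a small $A\subsetneq S'$ must avoid $x$, hence $A\subseteq S$ with $A\neq S$, while a large $A\supsetneq S'$ already properly contains $S$.

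The only step requiring any real content is the easy case; the perturbation arguments are routine bookkeeping of strict containments relying on $|H|\geq 2$ and $S\notin\mathcal{A}$, and I do not anticipate any significant obstacle.
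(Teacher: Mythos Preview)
Your argument is correct, but it proceeds along a different line from the paper's. The paper argues by contradiction: assuming $S$ neither contains a small set nor lies inside a large one, saturation forces either a large $A\subsetneq S$ or a small $B\supsetneq S$; in the first case it perturbs $A$ (not $S$) to $T:=(A\setminus\{x\})\cup\{y\}$ with $x\in H$, $y\in S\setminus A$, and uses the antichain property to show any element of $\mathcal{A}$ comparable to $T$ must be a small set below $T\subseteq S$, contradicting the hypothesis; the second case is dispatched by complementation. Your approach instead case-splits on $S\cap H$, handles the ``mixed'' case $\emptyset\neq S\cap H\neq H$ directly from saturation, and reduces the two extreme cases by perturbing $S$ on a single element of $H$. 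Your route is a bit more elementary and avoids the contradiction framing, while the paper's exploits the self-duality under complementation to halve the work; both rely essentially on $|H|\ge 2$ and the small/large dichotomy for elements of $\mathcal{A}$.
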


\begin{proof}
Suppose, to the contrary, that $S\in \mathcal{P}(X)\setminus\mathcal{A}$ does not contain a set of $\sml{A}$ and is not contained in a set of $\lrg{A}$. Since $\mathcal{A}$ is saturated, we get that either 
\begin{enumerate}[(a)]
\item\label{bigcont} there exists $A\in \lrg{A}$ such that $A\subsetneq S$, or
\item\label{smallcont} there exists $B\in \sml{A}$ such that $S\subsetneq B$.
\end{enumerate}
Suppose that (\ref{bigcont}) holds. Let $y\in S\setminus A$ and $x\in H$ and define $T:=(A\setminus\{x\})\cup\{y\}$. Since $H$ is homogeneous for $\mathcal{A}$ and $T\cap H=H\setminus\{x\}$, we must have $T\notin\mathcal{A}$. Also, since $H$ is homogeneous for $\mathcal{A}$, any set $T'\in\mathcal{A}$ containing $T$ would have to contain $T\cup\{x\}\supsetneq A$. Therefore, since $\mathcal{A}$ is an antichain, no such set $T'$ can exist. Thus, there is a set $T''\in\mathcal{A}$ such that $T''\subsetneq T\subseteq S$. Since $H$ is homogeneous for $\mathcal{A}$ and $T\cap H\neq H$, we get that $T''\in\sml{A}$, contradicting our assumption on $S$.

Note that we are also done in the case that (\ref{smallcont}) holds by considering the saturated antichain $\{X\setminus A: A\in \mathcal{A}\}$ and applying the argument of the previous paragraph.
\end{proof}

\subsection{Constructing and Decomposing Saturated \texorpdfstring{$\boldsymbol{k}$}{k}-Sperner Systems}

There is a natural way to partition a $k$-Sperner system $\mathcal{F}\subseteq\mathcal{P}(X)$ into a sequence of $k$ pairwise disjoint antichains. Specifically, for $0\leq i\leq k-1$, let $\mathcal{A}_i$ be the collection of all minimal elements of $\mathcal{F} \setminus \left(\bigcup_{j<i}\mathcal{A}_j\right)$ under inclusion. We say that $\left(\mathcal{A}_i\right)_{i=0}^{k-1}$ is the \emph{canonical decomposition} of $\mathcal{F}$ into antichains. 

In this section we provide conditions under which a sequence of $k$ pairwise disjoint saturated antichains can be united to obtain a saturated $k$-Sperner system. Later we will prove a partial converse: if $\mathcal{F}\subseteq\mathcal{P}(X)$ is a saturated $k$-Sperner system with a homogeneous set, then every antichain of the canonical decomposition of $\mathcal{F}$ is saturated. We also provide an example which shows that this is not necessarily the case if we remove the condition that $\mathcal{F}$ has a homogeneous set. 

\begin{defn}
We say that a sequence $(\mathcal{D}_i)_{i=0}^{t}$ of subsets of $\mathcal{P}(X)$ is \emph{layered} if, for $1\leq i\leq t$, every $D\in\mathcal{D}_i$ strictly contains some $D'\in\mathcal{D}_{i-1}$ as a subset.
\end{defn}
Note that the canonical decomposition of any set system is layered.
\begin{lem}
\label{up}
If $(\mathcal{A}_i)_{i=0}^{t}$ is a layered sequence of pairwise disjoint saturated antichains, then every $A \in \mathcal{A}_i$ is strictly contained in some $B \in \mathcal{A}_{i+1}$
\end{lem}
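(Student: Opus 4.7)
The plan is to fix $A\in\mathcal{A}_i$ (for $0\le i\le t-1$) and argue directly from saturation of $\mathcal{A}_{i+1}$, using the layered hypothesis to exclude the "wrong" direction of comparability. Since the $\mathcal{A}_j$ are pairwise disjoint, $A\notin\mathcal{A}_{i+1}$, so by saturation of the antichain $\mathcal{A}_{i+1}$, the family $\mathcal{A}_{i+1}\cup\{A\}$ contains a $2$-chain. Hence there exists some $M\in\mathcal{A}_{i+1}$ comparable with $A$, and exactly one of the following holds:
\begin{enumerate}[(a)]
\item there exists $B\in\mathcal{A}_{i+1}$ with $A\subsetneq B$, or
\item there exists $C\in\mathcal{A}_{i+1}$ with $C\subsetneq A$.
\end{enumerate}

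The main step is to rule out (b), and this is where the layered hypothesis does the work. Suppose (b) held, so some $C\in\mathcal{A}_{i+1}$ satisfies $C\subsetneq A$. Since $i+1\ge 1$ and $(\mathcal{A}_j)_{j=0}^t$ is layered, $C$ strictly contains some $C_0\in\mathcal{A}_i$. Then $C_0\subsetneq C\subsetneq A$, and in particular $C_0\subsetneq A$ with both $C_0,A\in\mathcal{A}_i$, contradicting the fact that $\mathcal{A}_i$ is an antichain. Therefore (a) must hold, giving the desired $B\in\mathcal{A}_{i+1}$ with $A\subsetneq B$.

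I do not expect any real obstacle here: the only subtlety is recognising that saturation of $\mathcal{A}_{i+1}$ by itself only supplies a comparable element, not one above $A$, and the layered condition is precisely what one needs to veto a downward-comparable element. The argument is short, uses only the definitions of \emph{saturated antichain}, \emph{pairwise disjoint}, and \emph{layered}, and requires none of the more delicate machinery developed earlier for homogeneous sets.
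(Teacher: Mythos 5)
Your proof is correct and follows essentially the same route as the paper: invoke saturation of $\mathcal{A}_{i+1}$ and disjointness to obtain a set comparable with $A$, then use the layered hypothesis to produce a subset in $\mathcal{A}_i$ below it, contradicting the antichain property if that comparable set were below $A$.
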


\begin{proof}
Let $A \in \mathcal{A}_i$. Since $\mathcal{A}_{i+1}$ is a saturated antichain disjoint from $\mathcal{A}_{i}$, there exists some $B \in \mathcal{A}_{i+1}$ such that either $B \subsetneq A$ or $A \subsetneq B$. In the latter case we are done, so suppose $B \subsetneq A$. Since $(\mathcal{A}_i)_{i=0}^{t}$ is layered, there exists some $A' \in \mathcal{A}_i$ such that $A' \subsetneq B$. Hence we have $A' \subsetneq B \subsetneq A$, contradicting the fact that $\mathcal{A}_i$ is an antichain and completing the proof. 
\end{proof}

\begin{lem}
\label{cont}
If $(\mathcal{A}_i)_{i=0}^{k-1}$ is a layered sequence of pairwise disjoint saturated antichains in $\mathcal{P}(X)$, then $\mathcal{F}:=\bigcup_{i=0}^{k-1}\mathcal{A}_i$
is a saturated $k$-Sperner system. 
\end{lem}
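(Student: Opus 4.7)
The plan is to verify the two defining properties of a saturated $k$-Sperner system separately: first that $\mathcal{F}$ contains no $(k+1)$-chain, and second that adjoining any $S\in\mathcal{P}(X)\setminus\mathcal{F}$ creates one.

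For the first property, the observation is immediate: because each $\mathcal{A}_i$ is an antichain and the $\mathcal{A}_i$ are pairwise disjoint, any chain in $\mathcal{F}$ has at most one element from each $\mathcal{A}_i$, hence at most $k$ elements total. So $\mathcal{F}$ is indeed $k$-Sperner.

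For saturation, let $S\in\mathcal{P}(X)\setminus\mathcal{F}$. Since each $\mathcal{A}_i$ is saturated and $S\notin\mathcal{A}_i$, for every $i$ either there is some $A\in\mathcal{A}_i$ with $A\subsetneq S$ or some $B\in\mathcal{A}_i$ with $S\subsetneq B$. Let $I^-$ (respectively $I^+$) be the set of indices of the first (respectively second) type; thus $I^-\cup I^+=\{0,\dots,k-1\}$. I would first rule out the overlap: if some $i$ lies in both $I^-$ and $I^+$, then $A\subsetneq S\subsetneq B$ for some $A,B\in\mathcal{A}_i$, contradicting the antichain property. Next, using the layered hypothesis, I would show that $I^-$ is downward-closed: if $A\in\mathcal{A}_i$ satisfies $A\subsetneq S$ and $j<i$, iterating the layered property produces $A_j\subsetneq A_{j+1}\subsetneq\dots\subsetneq A_i=A\subsetneq S$ with $A_\ell\in\mathcal{A}_\ell$, so $j\in I^-$. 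Symmetrically, by Lemma~\ref{up}, $I^+$ is upward-closed. Combined with disjointness and covering, this forces $I^-=\{0,\dots,t\}$ and $I^+=\{t+1,\dots,k-1\}$ for some $t\in\{-1,0,\dots,k-1\}$.

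It remains to assemble the $(k+1)$-chain. When $0\le t\le k-2$, pick $A_t\in\mathcal{A}_t$ with $A_t\subsetneq S$ and $B_{t+1}\in\mathcal{A}_{t+1}$ with $S\subsetneq B_{t+1}$; extend $A_t$ downward by the layered property to $A_0\subsetneq\dots\subsetneq A_t$ and extend $B_{t+1}$ upward by Lemma~\ref{up} to $B_{t+1}\subsetneq\dots\subsetneq B_{k-1}$, yielding the chain
\[A_0\subsetneq\dots\subsetneq A_t\subsetneq S\subsetneq B_{t+1}\subsetneq\dots\subsetneq B_{k-1}\]
of length $k+1$ in $\mathcal{F}\cup\{S\}$. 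The boundary cases $t=-1$ and $t=k-1$ are handled the same way, using only Lemma~\ref{up} or only the layered property to build $k$ sets from $\mathcal{F}$ that fit strictly above or below $S$.

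I expect the main (minor) obstacle to be keeping the monotonicity arguments for $I^-$ and $I^+$ clean — particularly making sure the layered property is invoked correctly to get a strict chain inside a single antichain's downward closure — but once those are stated, the case analysis is mechanical.
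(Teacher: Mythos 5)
Your proof is correct and takes essentially the same approach as the paper's: define $t$ as the largest index with a set of $\mathcal{A}_t$ strictly below $S$, use the layered property to build the chain below $S$, and use saturation of $\mathcal{A}_{t+1}$ together with Lemma~\ref{up} to build the chain above $S$. Your $I^-/I^+$ bookkeeping is a slightly more explicit (and arguably cleaner) way of organising exactly the same case analysis that the paper performs.
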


\begin{proof}
Clearly, $\mathcal{F}$ is a $k$-Sperner system since $\mathcal{A}_0,\dots,\mathcal{A}_{k-1}$ are antichains. Let $S \in \mathcal{P}(X) \setminus \mathcal{F}$ be arbitrary and define $t = \max\{i : S \supsetneq A \text{ for some } A \in \mathcal{A}_i\}$. If $t\geq0$, then $S$ strictly contains some set $A_t \in \mathcal{A}_t$. As $(\mathcal{A}_i)_{i=0}^{k-1}$ is layered, for $0 \le i \le t-1$, there exist sets $A_i \in \mathcal{A}_i$ such that
$$A_0 \subsetneq \dots \subsetneq A_t \subsetneq S.$$
Now, if $t\geq k-2$, then since $\mathcal{A}_{t+1}$ is a saturated antichain and $S$ does not contain a set of $\mathcal{A}_{t+1}$, there must exist $A_{t+1}\in\mathcal{A}_{t+1}$ such that $S\subsetneq A_{t+1}$. By Lemma~\ref{up}, we see that for $t+2 \le i \le k-1$ there exists $A_i \in \mathcal{A}_i$ such that
$$S \subsetneq A_{t+1}  \subsetneq \dots \subsetneq A_{k-1}.$$
Thus $\{A_0,\dots,A_{k-1}\}\cup\{S\}$ is a $(k+1)$-chain, as desired. 
\end{proof}

In Lemma~\ref{cont}, we require the sequence $(\mathcal{A}_i)_{i=0}^{k-1}$ of saturated antichains to be layered. As it turns out, if each antichain $\mathcal{A}_i$ has a homogeneous set, then $(\mathcal{A}_i)_{i=0}^{k-1}$ is layered if and only if $\left(\sml{A}_i\right)_{i=0}^{k-1}$ is layered. 

\begin{lem}
\label{layer}
Let $(\mathcal{A}_i)_{i=0}^{k-1}$ be a sequence of pairwise disjoint saturated antichains in $\mathcal{P}(X)$, each of which has a homogeneous set. Then $(\mathcal{A}_i)_{i=0}^{k-1}$ is layered if and only if $\left(\sml{A}_i\right)_{i=0}^{k-1}$ is layered.
\end{lem}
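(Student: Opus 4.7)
The plan is to prove both implications directly by case analysis, making repeated use of Lemma~\ref{thisorthat} (every set outside a saturated antichain $\mathcal{A}$ either contains a set in $\sml{A}$ or lies inside a set in $\lrg{A}$) together with the antichain property of each $\mathcal{A}_j$ and the disjointness of the family. The structural key is that whenever Lemma~\ref{thisorthat} hands us a ``wrong'' witness (large-set rather than small-set), we can combine it with a strict chain in some $\mathcal{A}_j$ to force two comparable sets inside a single antichain, which is a contradiction.

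For the forward direction, suppose $(\mathcal{A}_i)_{i=0}^{k-1}$ is layered and let $S \in \sml{A}_i$ with $i \geq 1$. Layeredness gives $S' \in \mathcal{A}_{i-1}$ with $S' \subsetneq S$; if $S' \in \sml{A}_{i-1}$ we are done. Otherwise $S' \in \lrg{A}_{i-1}$, so I apply Lemma~\ref{thisorthat} to the antichain $\mathcal{A}_{i-1}$ at the set $S$ (which is not in $\mathcal{A}_{i-1}$ by disjointness). Either $S$ contains some $B \in \sml{A}_{i-1}$, which is precisely what we need (with $B \subsetneq S$ because $B \neq S$ by disjointness), or $S \subseteq T$ for some $T \in \lrg{A}_{i-1}$. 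In the latter case $S' \subsetneq S \subseteq T$ with $S', T \in \mathcal{A}_{i-1}$, which forces either $S' \subsetneq T$ (violating the antichain property) or $T = S' \subsetneq S \subseteq T$ (an immediate contradiction).

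For the backward direction, suppose $\left(\sml{A}_i\right)_{i=0}^{k-1}$ is layered and let $A \in \mathcal{A}_i$ with $i \geq 1$. If $A \in \sml{A}_i$, the layered hypothesis on the small parts produces $A' \in \sml{A}_{i-1} \subseteq \mathcal{A}_{i-1}$ with $A' \subsetneq A$. Otherwise $A \in \lrg{A}_i$; apply Lemma~\ref{thisorthat} to $\mathcal{A}_{i-1}$ at $A$. Either $A$ contains some $B \in \sml{A}_{i-1}$ (so $B \subsetneq A$ by disjointness, as required), or $A \subsetneq T$ for some $T \in \lrg{A}_{i-1}$. In this remaining ``bad'' case I apply Lemma~\ref{thisorthat} a second time, now to $\mathcal{A}_i$ at $T$, producing two sub-subcases: (i)~$T$ contains some $B' \in \sml{A}_i$, and (ii)~$T \subseteq S'$ for some $S' \in \lrg{A}_i$.

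The main obstacle is closing off these two sub-subcases, and this is exactly where the hypothesis that $\left(\sml{A}_i\right)$ is layered gets used. In sub-subcase~(i), layeredness of the small parts gives $B'' \in \sml{A}_{i-1}$ with $B'' \subsetneq B' \subseteq T$, so $B''$ and $T$ are comparable elements of the antichain $\mathcal{A}_{i-1}$, a contradiction. In sub-subcase~(ii), $A \subsetneq T \subseteq S'$ makes $A$ and $S'$ comparable elements of the antichain $\mathcal{A}_i$, again a contradiction. So the ``bad'' case cannot occur, and $(\mathcal{A}_i)_{i=0}^{k-1}$ is layered. Once the two-level application of Lemma~\ref{thisorthat} is set up correctly, everything else is bookkeeping using disjointness and the antichain property.
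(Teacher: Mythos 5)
Your proof is correct and follows essentially the same route as the paper: both directions reduce to a case analysis driven by Lemma~\ref{thisorthat}, using disjointness to force strict inclusions and the antichain property of each $\mathcal{A}_j$ to close off bad branches. The only cosmetic difference is in the backward direction, where the paper first notes directly (via transitivity with $S$) that the set $T$ above cannot lie inside a large set of the upper layer and so takes only the ``contains a small set'' branch of Lemma~\ref{thisorthat}, whereas you apply Lemma~\ref{thisorthat} a second time and rule out that branch as a separate sub-case; the underlying contradiction is the same.
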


\begin{proof}
Suppose that $(\mathcal{A}_i)_{i=0}^{k-1}$ is layered and, for some $i\geq0$, let $A\in \sml{A}_{i+1}$ be arbitrary. We show that $A$ contains a set of $\sml{A}_i$. Otherwise, since $(\mathcal{A}_i)_{i=0}^{k-1}$ is layered, we get that there is some $B\in\lrg{A}_i$ such that $B\subsetneq A$. Therefore, since $\mathcal{A}_i$ is an antichain, $A$ cannot be contained in an element of $\lrg{A}_i$. By Lemma~\ref{thisorthat} and the fact that $\mathcal{A}_i$ and $\mathcal{A}_{i+1}$ are disjoint, we get that $A$ contains a set of $\sml{A}_i$, as desired.

Now, suppose that $\left(\sml{A}_i\right)_{i=0}^{k-1}$ is layered. Given $i\geq0$ and $S\in \lrg{A}_{i+1}$, we show that $S$ contains a set of $\mathcal{A}_i$, which will complete the proof. If not, then since $\mathcal{A}_i$ is saturated and disjoint from $\mathcal{A}_{i+1}$, there must exist $T\in \mathcal{A}_i$ such that $S\subsetneq T$. Since $\mathcal{A}_{i+1}$ is an antichain, $S$ cannot be strictly contained in a set of $\lrg{A}_{i+1}$, and so neither can $T$. Therefore, by Lemma~\ref{thisorthat}, there is a set $A\in\sml{A}_{i+1}$ contained in $T$. However, since $\left(\sml{A}_i\right)_{i=0}^{k-1}$ is layered, there exists $A'\in\sml{A}_i$ such that $A'\subsetneq A$. But then, $A'\subsetneq T$, which contradicts the assumption that $\mathcal{A}_i$ is an antichain. The result follows. 
\end{proof}

It is natural to wonder whether a converse to Lemma~\ref{cont} is true. That is: \emph{if $\mathcal{F}$ is a saturated $k$-Sperner system, can we decompose $\mathcal{F}$ into a layered sequence of $k$ pairwise disjoint saturated antichains?} The following example shows that this is not always the case. 

\begin{ex}
\label{Cambridge}
Let $X := \{x_1,x_2,x_3\}$, $Y := \{y_1,y_2,y_3\}$ and $Z := X \cup Y$. We define 
\[\mathcal{B}_0:= \{\{x_i,x_j\}: i \not= j \} \cup \{\{x_i,y_i\}: i\in\{1,2,3\}\}\cup \{\{x_k,y_i,y_j\} : i,j,k \text{ distinct }\}\cup \{Y\}, \] 
\[\mathcal{B}_1:=\{X,\{x_1,x_2,y_1\},\{x_1,x_3,y_3\},\{x_2,x_3,y_2\},\{x_1,y_1,y_3\},\{x_2,y_1,y_2\},\{x_3,y_2,y_3\},\]
$$ \{x_1,x_2,y_2,y_3\}, \{x_1,x_3,y_1,y_2\}, \{x_2,x_3,y_1,y_3\}\}.$$ 
Then $\left(\mathcal{B}_i\right)_{i=0}^1$ is a layered sequence of disjoint antichains. In fact, $\left(\mathcal{B}_i\right)_{i=0}^1$ is the canonical decomposition of $\mathcal{F} := \mathcal{B}_0 \cup \mathcal{B}_1$. Clearly $\mathcal{B}_1$ is not saturated as $\mathcal{B}_1\cup\{Y\}$ is an antichain. We claim that $\mathcal{F}$ is a saturated $2$-Sperner system.

Consider any $S \in \mathcal{P}(Z)\setminus \mathcal{F}$. We will show that $\mathcal{F}\cup\{S\}$ contains a $3$-chain. It is easy to check that every element of $\mathcal{B}_0\setminus \{Y\}$ is contained in a set of $\mathcal{B}_1$. Hence if $S$ is contained in some set $B \in \mathcal{B}_0\setminus \{Y\}$, then $\mathcal{F}\cup\{S\}$ contains a $3$-chain. In particular, this completes the proof when $|S| \in \{0,1,2\}$. Similarly, since $\left(\mathcal{B}_i\right)_{i=0}^1$ is layered, if $S$ contains some set $B \in \mathcal{B}_1$, then $\mathcal{F}\cup\{S\}$ contains a $3$-chain. Therefore, we are done if $|S| \in \{4,5,6\}$. 

It remains to consider the case that $|S| = 3$. Since $X,Y\in \mathcal{F}$, we must have $|S \cap Y| = 2$, or $|S \cap X| = 2$. If $|S \cap Y| = 2$, we have $S \in \{\{x_1, y_1, y_2\}, \{x_2,y_2,y_3\}, \{x_3, y_1, y_3\}\}$. This implies that $S$ is contained in a set $B \in \mathcal{B}_1$ and contains a set $B' \in \mathcal{B}_0 \cap \mathcal{P}(X)$. If $|S\cap X|=2$, then $S$ contains some set $\{x_i,x_j\}\in\mathcal{B}_0$. Also, it is easily verified that $S$ is contained in a set of $\mathcal{B}_1$. Thus, $\mathcal{F}$ is a saturated 2-Sperner system.
\end{ex}

However, for saturated $k$-Sperner systems with a homogeneous set, the converse to Lemma~\ref{cont} does hold; we can partition $\mathcal{F}$ into a layered sequence of $k$ pairwise disjoint saturated antichains. 

\begin{lem}\label{satur}
Let $\mathcal{F} \in \mathcal{P}(X)$ be a saturated $k$-Sperner system with homogeneous set $H$ and canonical decomposition $\left(\mathcal{A}_i\right)_{i=0}^{k-1}$. Then $\mathcal{A}_i$ is saturated for all $i$.
\end{lem}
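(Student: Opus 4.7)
My plan rests on one structural observation about the canonical decomposition together with a case split on whether $S$ is already in $\mathcal{F}$. The observation is that, in any $k$-Sperner system $\mathcal{F}$ with canonical decomposition $(\mathcal{A}_i)_{i=0}^{k-1}$, every $k$-chain $A_0\subsetneq\cdots\subsetneq A_{k-1}$ in $\mathcal{F}$ satisfies $A_j\in\mathcal{A}_j$ for all $j$. I would prove this by showing that whenever $A\in\mathcal{A}_l$ is strictly contained in some $A'\in\mathcal{F}$, we must have $A'\in\mathcal{A}_{l'}$ with $l'>l$: the case $l'=l$ is forbidden because $\mathcal{A}_l$ is an antichain, and $l'<l$ is forbidden because then $A\in\mathcal{F}\setminus\bigcup_{j<l'}\mathcal{A}_j$ would lie strictly below $A'$, contradicting the minimality condition defining $\mathcal{A}_{l'}$. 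Applied along the chain, this yields $0\le l_0<l_1<\cdots<l_{k-1}\le k-1$, forcing $l_j=j$.

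Fix $i$ and take $S\in\mathcal{P}(X)\setminus\mathcal{A}_i$; the goal is to exhibit $A\in\mathcal{A}_i$ comparable to $S$. If $S\notin\mathcal{F}$, then saturation of $\mathcal{F}$ yields a $(k+1)$-chain in $\mathcal{F}\cup\{S\}$, and deleting $S$ leaves a $k$-chain in $\mathcal{F}$ whose $j$-th element is in $\mathcal{A}_j$ by the structural observation; its $i$-th element is therefore in $\mathcal{A}_i$ and comparable to $S$.

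The harder case is $S\in\mathcal{F}$, so $S\in\mathcal{A}_j$ with $j\neq i$. Here I would use the homogeneous set $H$ to perturb $S$ off $\mathcal{F}$: take $T:=S\cup\{x\}$ for some $x\in H$ when $S$ is small, or $T:=S\setminus\{y\}$ for some $y\in H$ when $S$ is large. Since $|H|\ge 2$ and $H$ is homogeneous, $T\cap H\notin\{\emptyset,H\}$, so $T\notin\mathcal{F}$. Saturation then supplies a $(k+1)$-chain in $\mathcal{F}\cup\{T\}$,
\[B_0\subsetneq\cdots\subsetneq B_{s-1}\subsetneq T\subsetneq B_s\subsetneq\cdots\subsetneq B_{k-1},\]
with $B_l\in\mathcal{F}$ for each $l$. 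A second use of homogeneity splits this chain cleanly along $T$: every $B_l$ with $l\ge s$ is large and contains $S$, while every $B_l$ with $l<s$ is small and is contained in $S$. If $S$ were strictly sandwiched between $B_{s-1}$ and $B_s$, then $B_0\subsetneq\cdots\subsetneq B_{s-1}\subsetneq S\subsetneq B_s\subsetneq\cdots\subsetneq B_{k-1}$ would be a $(k+1)$-chain entirely in $\mathcal{F}$, contradicting the $k$-Sperner property. Hence $S$ coincides with $B_{s-1}$ (small case) or $B_s$ (large case), producing a genuine $k$-chain in $\mathcal{F}$ through $S$; the structural observation then places its $i$-th element in $\mathcal{A}_i$, comparable to $S$.

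The main obstacle is the second case: the perturbation $T$ has to be designed so that homogeneity both splits the induced chain into the small and large parts of $\mathcal{F}$ aligned with $S$ and forces its pivot to equal $S$ rather than strictly sandwich it. The endpoint cases $s=0$ and $s=k$ are ruled out by the same $(k+1)$-chain-in-$\mathcal{F}$ contradiction, and everything else is bookkeeping.
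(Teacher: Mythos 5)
Your argument is correct, and it is essentially the paper's approach: perturb $S$ using the homogeneous set to land off $\mathcal{F}$, invoke saturation, and then use homogeneity again to recover a chain through $S$ whose $j$-th element lies in $\mathcal{A}_j$. The one place you work harder than necessary is the case split on $S\in\mathcal{F}$: the paper uses the single perturbation $T:=(S\setminus H)\cup\{x\}$, which satisfies $T\cap H=\{x\}\notin\{\emptyset,H\}$ and hence $T\notin\mathcal{F}$ for \emph{every} $S$ (in $\mathcal{F}$ or not), and then observes $A_{t-1}\subseteq T\setminus H\subseteq S\subseteq T\cup H\subseteq A_t$, so both of your cases collapse into one. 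One small imprecision in your second case: the endpoint values $s=0,k$ are not uniformly "ruled out" — for instance $s=k$ can occur when $S$ is small, in which case it forces $S=B_{k-1}$ rather than yielding a contradiction — but the same $(k+1)$-chain argument still pins $S$ to an endpoint of the chain, so the conclusion holds.
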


\begin{proof}
Fix $i$ and let $S\in \mathcal{P}(X)\setminus \mathcal{A}_i$. Let $x\in H$ and define
\[T:=(S\setminus H)\cup\{x\}.\]
Then $T\notin \mathcal{F}$ since $T\cap H=\{x\}$ and $H$ is homogeneous for $\mathcal{F}$. Therefore, there exists $\{A_0,\dots,A_{k-1}\}\subseteq\mathcal{F}$ and $t \in \{0,\ldots, k\}$ such that
\[A_0\subsetneq \dots\subsetneq A_{t-1}\subsetneq T\subsetneq A_{t}\subsetneq\dots\subsetneq A_{k-1}.\]
By definition of the canonical decomposition, we must have $A_j\in\mathcal{A}_j$ for all $j$. Also, since $H$ is homogeneous for $\mathcal{F}$ and $T\cap H\notin\{\emptyset,H\}$, we must have $A_{t-1}\subseteq T\setminus H \subseteq S$ and $A_{t}\supseteq T\cup H \supseteq S$. Therefore,
\[A_0\subsetneq \dots\subsetneq A_{t-1}\subseteq S\subseteq A_{t}\subsetneq\dots\subsetneq A_{k-1}.\]
Since $S\neq A_i$, we must have either $A_i\subsetneq S$ or $S\subsetneq A_i$ depending on whether or not $i<t$. Therefore, $\mathcal{A}_i$ is saturated for all $i$. 
\end{proof}

\section{Combining Saturated \texorpdfstring{$\boldsymbol{k}$}{k}-Sperner Systems}
\label{comb}

Our first goal in this section is to prove that, under certain conditions, a saturated $k_1$-Sperner system $\mathcal{F}_1\subseteq\mathcal{P}(X_1)$  and a saturated $k_2$-Sperner system $\mathcal{F}_2\subseteq\mathcal{P}(X_2)$ can be combined to yield a saturated $(k_1 +k_2-2)$-Sperner system in $\mathcal{P}(X_1\cup X_2)$. We apply this result to prove Theorem~\ref{epsilon}. Afterwards, we prove that  $\sat(k)=2^{k-1}$ for $k\leq 5$. We conclude the section with a proof of Theorem~\ref{submult}.

\begin{lem}
\label{comblem}
Let $X_1$ and $X_2$ be disjoint sets. For $i\in\{1,2\}$, let $\mathcal{F}_i\subseteq \mathcal{P}(X_i)$ be a saturated $k_i$-Sperner system which contains $\{\emptyset,X_i\}$ and let $H_i\subseteq X_i$ be homogeneous for $\mathcal{F}_i$. If $\mathcal{G}$ is the set system on $\mathcal{P}(X_1\cup X_2)$ defined by
\[\mathcal{G}:=\left\{A\cup B: A\in\sml{F}_1, B\in\sml{F}_2\right\}\cup\left\{S\cup T: S\in\lrg{F}_1, T\in\lrg{F}_2\right\},\]
then $\mathcal{G}$ is a saturated $(k_1+k_2-2)$-Sperner system which contains $\{\emptyset,X_1\cup X_2\}$ and $H_1\cup H_2$ is homogeneous for $\mathcal{G}$. 
\end{lem}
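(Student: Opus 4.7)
I would establish the four assertions in order. First, $\emptyset, X_1 \cup X_2 \in \mathcal{G}$ follow from $\emptyset \in \sml{F}_i$ and $X_i \in \lrg{F}_i$. For homogeneity of $H_1 \cup H_2$, every element of $\mathcal{G}$ is either a small union disjoint from $H_1 \cup H_2$ or a large union containing it, so $H_1 \cup H_2$ is an atom for $\mathcal{G}$. Maximality reduces, for any $x \in Y_i := X_i \setminus H_i$, to the maximality of $H_i$ in $\mathcal{F}_i$: choose $A \in \mathcal{F}_i$ with $A \cap (H_i \cup \{x\}) \notin \{\emptyset, H_i \cup \{x\}\}$, then union $A$ with $\emptyset$ or $X_{3-i}$ (depending on whether $A$ is small or large in $\mathcal{F}_i$) to obtain an element of $\mathcal{G}$ splitting $(H_1 \cup H_2) \cup \{x\}$.

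For the $(k_1+k_2-2)$-Sperner bound, any chain $G_1 \subsetneq \cdots \subsetneq G_m$ in $\mathcal{G}$ decomposes uniquely into a small-union prefix followed by a large-union suffix. Writing $G_i = \tilde A_i \cup \tilde B_i$ with $\tilde A_i \in \mathcal{F}_1$ and $\tilde B_i \in \mathcal{F}_2$, the disjointness of $X_1, X_2$ makes $(\tilde A_i)_i$ and $(\tilde B_i)_i$ chains in $\mathcal{F}_1, \mathcal{F}_2$ with at least one coordinate strictly increasing at each step. Crucially, at the small-to-large transition step both coordinates strictly increase, since a small set is properly contained in any large set on its side. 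Letting $s, t$ denote the strict-step counts on each side, $s + t \geq m$ in the presence of a transition while $s \leq k_1 - 1$ and $t \leq k_2 - 1$, yielding $m \leq k_1 + k_2 - 2$; the no-transition cases (all-small or all-large chains) give even shorter chains since $X_i$ or $\emptyset$ is absent from one projection.

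The main obstacle is saturation. My plan is to apply Lemma~\ref{cont} by exhibiting $\mathcal{G}$ as the union of a layered sequence of $k_1 + k_2 - 2$ pairwise disjoint saturated antichains. Using the canonical decompositions $(\mathcal{A}_j^{(i)})_{j=0}^{k_i - 1}$ of $\mathcal{F}_i$ (each $\mathcal{A}_j^{(i)}$ saturated by Lemma~\ref{satur}), define
\[\mathcal{B}_m := \{A \cup B : A \in \mathcal{A}_j^{(1)} \cap \sml{F}_1,\; B \in \mathcal{A}_l^{(2)} \cap \sml{F}_2,\; j+l = m\} \cup \{S \cup T : S \in \mathcal{A}_j^{(1)} \cap \lrg{F}_1,\; T \in \mathcal{A}_l^{(2)} \cap \lrg{F}_2,\; j+l = m+1\}\]
for $0 \leq m \leq k_1 + k_2 - 3$. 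These sets partition $\mathcal{G}$, and each $\mathcal{B}_m$ is an antichain: strict containment of a small union at total level $m$ inside a large union at total level $m+1$ would force each coordinate level to strictly increase, contradicting that the totals differ by only $1$. Layering of the small portion of $\mathcal{B}_m$ follows from Lemma~\ref{layer}, while layering of the large portion comes either from a large-union predecessor (when the large parts admit down-layering, which one can establish by dualizing Lemma~\ref{layer} via complementation) or from a small-union predecessor (available because $\emptyset \in \sml{F}_i$). The most delicate step will be showing each $\mathcal{B}_m$ saturated: given $T \notin \mathcal{B}_m$, case-split on whether each $T_i := T \cap X_i$ is small, large, or has split intersection with $H_i$, and apply Lemma~\ref{thisorthat} together with the saturation of each $\mathcal{A}_j^{(i)}$ to produce an element of $\mathcal{B}_m$ comparable to $T$.
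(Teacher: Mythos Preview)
Your treatment of the first three claims is fine and essentially matches the paper: the paper also dismisses $\{\emptyset,X_1\cup X_2\}\subseteq\mathcal{G}$ and the homogeneity of $H_1\cup H_2$ as clear, and handles the $(k_1+k_2-2)$-Sperner property by tracking strict increases in each coordinate and noting that both coordinates jump at the small-to-large transition.

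For saturation you take a genuinely different route. The paper argues directly: given $S\notin\mathcal{G}$, fix $x_i\in H_i$ and set $T:=(S\setminus(H_1\cup H_2))\cup\{x_1,x_2\}$. Then $T_i:=T\cap X_i$ satisfies $T_i\cap H_i=\{x_i\}$, so $T_i\notin\mathcal{F}_i$, and saturation of $\mathcal{F}_i$ yields a $k_i$-chain $\emptyset=A_1^i\subsetneq\cdots\subsetneq A_{t_i}^i\subsetneq T_i\subsetneq A_{t_i+1}^i\subsetneq\cdots\subsetneq A_{k_i}^i=X_i$ in which $A_1^i,\dots,A_{t_i}^i$ are small and the rest large. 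These splice together (small--small below $S$, large--large above $S$) into a $(k_1+k_2-1)$-chain in $\mathcal{G}\cup\{S\}$. This is a few lines and uses nothing but the definition of saturation for the $\mathcal{F}_i$.

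Your plan via Lemma~\ref{cont} is plausible but considerably heavier, and as written it has gaps. First, your appeal to Lemma~\ref{layer} for layering is circular: that lemma assumes the antichains in the sequence are already saturated, which is exactly what you are trying to establish for the $\mathcal{B}_m$. (Layering can in fact be shown directly: for a small union one drops a level in either coordinate, and for a large union $S\cup T$ with $S\in\mathcal{A}_j^{(1)}$, $T\in\mathcal{A}_l^{(2)}$ one takes predecessors $S'\in\mathcal{A}_{j-1}^{(1)}$, $T'\in\mathcal{A}_{l-1}^{(2)}$ and uses $S'\cup T$, $S\cup T'$, or $S'\cup T'$ according to which of $S',T'$ are large or small. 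But this is not what you wrote.) Second, the saturation of each $\mathcal{B}_m$ is the crux, and your sketch (``case-split on small/large/split and apply Lemma~\ref{thisorthat}'') hides real work: for a set $T$ with, say, $T\cap X_1$ large-type and $T\cap X_2$ small-type, you must locate an element of $\mathcal{B}_m$ comparable to $T$ at a \emph{specific} total level $m$, which requires coordinating the levels in the two factors rather than just producing some comparable element in each $\mathcal{A}_j^{(i)}$. This can be done, but it is a genuine case analysis that your proposal does not carry out. Given that the paper's direct argument avoids all of this in a handful of lines, I would recommend switching to that approach.
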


\begin{proof}
It is clear that $\mathcal{G}$ contains $\{\emptyset,X_1\cup X_2\}$ and that $H_1\cup H_2$ is homogeneous for $\mathcal{G}$. We show that $\mathcal{G}$ is a saturated $(k_1+k_2-2)$-Sperner system. 

First, let us show that $\mathcal{G}$ does not contain a chain of length $k_1+k_2-1$. Suppose that $\{A_1,\dots,A_{r}\}$ is an $r$-chain in $\mathcal{G}$. We can assume that $A_1=\emptyset$ and $A_{r}=X_1\cup X_2$. Define
\[I_1:=\{i: A_i\cap X_1\subsetneq A_{i+1}\cap X_1\},\text{ and}\]
\[I_2:=\{i: A_i\cap X_2\subsetneq A_{i+1}\cap X_2\}.\]
Clearly, $I_1\cup I_2=\{1,\dots,r-1\}$. Also, for $i\in\{1,2\}$, since $\mathcal{F}_i$ is a $k_i$-Sperner system, we must have $|I_i|\leq k_i-1$. Let $t$ be the maximum index such that $A_t\cap X_1\in \sml{F}_1$. Note that $t$ exists and is less than $r$ since $A_1=\emptyset$ and $A_r=X_1\cup X_2$. By construction of $\mathcal{G}$, $A_t\cap X_2$ is a small set for $\mathcal{F}_2$ and, for $i\in\{1,2\}$, $A_{t+1}\cap X_i$ is a large set for $\mathcal{F}_i$. This implies that $t\in I_1\cap I_2$ and so
\[r-1 = |I_1\cup I_2| = |I_1|+|I_2|-|I_1\cap I_2|\leq k_1+k_2-3\]
as required. 

Now, let $S\in\mathcal{P}(X_1\cup X_2)\setminus\mathcal{G}$. We show that $\mathcal{G}\cup\{S\}$ contains a $(k_1+k_2-1)$-chain. Fix $x_1\in H_1$ and $x_2\in H_2$ and define
\[T:=(S\setminus (H_1\cup H_2))\cup\{x_1,x_2\}.\]
For $i\in\{1,2\}$, let $T_i:= T\cap X_i$. Then $T_i\notin\mathcal{F}_i$ since $T_i\cap H_i =\{x_i\}$. Therefore, there exists $A_1^i,\dots,A_{k_i}^i\in\mathcal{F}_i$ and $t_i\in\{1,\dots,k_i-1\}$ such that
\[\emptyset = A_1^i\subsetneq \dots\subsetneq A_{t_i}^i\subsetneq T_i\subsetneq A_{t_i+1}^i \subsetneq \dots\subsetneq A_{k_i}^i=X_i\]
Note that $A_j^i\in \sml{F}_i$ for $j\leq t_i$ and $A_j^i\in \lrg{F}_i$ for $j\geq t_i+1$. This implies that $A_{t_1}^1\cup A_{t_2}^2\subsetneq S$ and $A_{t_1+1}^1\cup A_{t_2+1}^2\supsetneq S$. Therefore,
\[A_1^1\cup A_1^2\subsetneq A_1^1\cup A_2^2\subsetneq\dots \subsetneq A_1^1\cup A_{t_2}^2\subsetneq A_2^1\cup A_{t_2}^2 \subsetneq \dots \subsetneq A_{t_1}^1\cup A_{t_2}^2\subsetneq S\]
\[\subsetneq A_{t_1+1}^1\cup A_{t_2+1}^2\subsetneq A_{t_1+1}^1\cup A_{t_2+2}^2\subsetneq \dots\subsetneq A_{t_1+1}^1\cup A_{k_2}^2\subsetneq A_{t_1+2}^1\cup A_{k_2}^2\subsetneq\dots\subsetneq A_{k_1}^2 \cup A_{k_2}^2\]
and so $\mathcal{G}\cup\{S\}$ contains a $(k_1+k_2-1)$-chain. The result follows.
\end{proof}

\begin{rem}
\label{sizes}
If $\mathcal{F}_1$, $\mathcal{F}_2$ and $\mathcal{G}$ are as in Lemma~\ref{comblem}, then
\[|\mathcal{G}| = \left|\sml{F}_1\right|\left|\sml{F}_2\right| + \left|\lrg{F}_1\right|\left|\lrg{F}_2\right|.\]
\end{rem}

\subsection{Proof of Theorem~\ref{epsilon}}

We apply Lemma~\ref{comblem} to prove Theorem~\ref{epsilon}. The first part of the proof of Theorem~\ref{epsilon} is to exhibit an infinite family of saturated $6$-Sperner systems with cardinality $30<2^5$. 

\begin{prop}
\label{k=6}
For any set $X$ such that $|X|\geq 8$, there is a saturated $6$-Sperner system $\mathcal{F}\subseteq\mathcal{P}(X)$ with a homogeneous set such that $\left|\sml{F}\right|=\left|\lrg{F}\right|=15$. 
\end{prop}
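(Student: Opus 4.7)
The plan is to first produce an explicit saturated $6$-Sperner system $\mathcal{F}_0$ of cardinality $30$ on a base $8$-element ground set, and then extend it to any larger ground set by absorbing the extra elements into the homogeneous set.

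For the base case, I would write $X_0 = Y \cup H$ with $|Y|=6$ and $|H|=2$, and construct a layered sequence $(\mathcal{A}_i)_{i=0}^{5}$ of six pairwise disjoint \emph{saturated} antichains in $\mathcal{P}(X_0)$, each having $H$ as a homogeneous set, whose union $\mathcal{F}_0$ has exactly $15$ small sets and $15$ large sets. By Lemma~\ref{cont}, such a union is automatically a saturated $6$-Sperner system of cardinality $30$. The numerical coincidence $15=\binom{6}{2}$ suggests that the small sets should arise from a symmetric combinatorial structure on the $6$-point set $Y$ (for example a cyclic arrangement, a $1$-factorization of $K_6$, or a small symmetric design), with the large sets obtained by a parallel mirrored construction on the ``$H$-side''. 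The verifications required are then: (i) each $\mathcal{A}_i$ is an antichain, (ii) each is saturated in $\mathcal{P}(X_0)$, (iii) the sequence is layered, and (iv) the counts of small and large sets both equal $15$.

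For arbitrary $X$ with $|X|\geq 8$, I would embed $X_0 \hookrightarrow X$, set $H' := H \cup (X\setminus X_0)$, and define $\mathcal{F}\subseteq\mathcal{P}(X)$ by keeping the small sets of $\mathcal{F}_0$ unchanged and replacing each large set $L$ of $\mathcal{F}_0$ by $L \cup (X\setminus X_0)$. Then $H'$ is homogeneous for $\mathcal{F}$ because every new element of $X$ lies in all $15$ large sets and in none of the $15$ small sets. The containment structure among the $30$ sets of $\mathcal{F}$ is identical to that of $\mathcal{F}_0$, so the $6$-Sperner property is preserved. Saturation transfers by a projection-to-$X_0$ argument in the spirit of Lemma~\ref{satk}: given $S \in \mathcal{P}(X)\setminus \mathcal{F}$, replace $S\cap H'$ by a single element of $H$ to obtain a proxy $T \in \mathcal{P}(X_0)\setminus\mathcal{F}_0$ (since $T\cap H$ is then neither empty nor all of $H$), apply saturation of $\mathcal{F}_0$ to $T$, and lift the resulting $7$-chain back into $\mathcal{P}(X)$ by adjoining $X\setminus X_0$ to each large set appearing in the chain.

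The main obstacle is the base construction on $8$ elements. The requirement that \emph{each} $\mathcal{A}_i$ be saturated as an antichain in $\mathcal{P}(X_0)$ is a strong local condition — every subset of $X_0$ outside $\mathcal{A}_i$ must be comparable with some element of $\mathcal{A}_i$ — and, combined with layeredness and the rigid $15+15$ count, this forces the candidate family to come from a genuinely combinatorial design on $Y$ rather than from a generic ``all sets of prescribed sizes'' choice. Once the base family is in hand, everything else is routine verification.
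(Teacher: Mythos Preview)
Your scaffolding matches the paper exactly: build a layered sequence $(\mathcal{A}_i)_{i=0}^{5}$ of pairwise disjoint saturated antichains sharing a common homogeneous set, invoke Lemma~\ref{cont}, and count $15+15$. But the proposal stops precisely where the content begins. You explicitly flag the base construction as ``the main obstacle'' and then do not supply it, and your heuristic for finding it --- the coincidence $15=\binom{6}{2}$, a symmetric design or $1$-factorization on a $6$-point set $Y$ --- points in the wrong direction. The paper's family is deliberately \emph{asymmetric}: the six non-homogeneous points carry distinct roles $x_1,x_2,y_1,y_2,w,z$; the layer sizes on the small side are $1,4,5,4,1,0$ rather than anything binomial; and the middle antichains are produced by specifying $\sml{A}_2,\sml{A}_3$ by hand and then taking $\lrg{A}_i$ to be the maximal sets not containing any member of $\sml{A}_i$. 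The paper also uses Lemma~\ref{layer} to check layeredness only on the small sides, which cuts the verification considerably.

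Two smaller points. First, the paper avoids your base-plus-extension manoeuvre entirely: it works directly on arbitrary $X$ with $|X|\ge 8$ by declaring $H:=X\setminus\{x_1,x_2,y_1,y_2,w,z\}$ from the outset, so no separate transfer argument is needed. Second, your transfer argument has a small hole: if $S\cap H'=\emptyset$ there is nothing to replace, so the proxy $T$ would have $T\cap H=\emptyset$ and might lie in $\mathcal{F}_0$; this case needs to be handled separately (it is easy, since then $S\subseteq Y$ already lives in $\mathcal{P}(X_0)\setminus\mathcal{F}_0$).

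In short, the plan is sound and aligned with the paper's, but without the explicit six antichains the proof is not there yet, and the symmetric-design intuition is unlikely to lead you to them.
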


\begin{proof}
Let $X$ be a set such that $|X|\geq8$. Let $x_1,x_2,y_1,y_2,w$ and $z$ be distinct elements of $X$ and define $H:=X\setminus \{x_1,x_2,y_1,y_2,w,z\}$. We apply Lemma~\ref{cont} to construct a saturated $6$-Sperner system $\mathcal{F}\subseteq\mathcal{P}(X)$ of order $30$. Naturally, we define $\mathcal{A}_0=\{\emptyset\}$ and $\mathcal{A}_5:=\{X\}$. Also, define
\[\mathcal{A}_1:=\{\{x_1\},\{x_2\},\{y_1\},\{w\}, H\cup\{y_2,z\}\},\text{ and}\]
\[\mathcal{A}_4:=\{X\setminus A: A\in\mathcal{A}_1\}.\]

It is easily observed that $\mathcal{A}_1$ and $\mathcal{A}_4$ are saturated antichains. We define $\mathcal{A}_2$ and $\mathcal{A}_3$ by first specifying their small sets. Define 
\[\sml{A}_2:=\{\{x_i,y_j\}: 1\leq i,j\leq 2\}\cup \{\{w,z\}\},\text{ and}\]
\[\sml{A}_3:=\{\{x_1,y_1,w\}, \{x_1,y_1,z\}, \{x_2,y_2,w\}, \{x_2,y_2,z\}\}.\]
Given any collection $\mathcal{B}\subseteq\mathcal{P}(X)$, a set $S\subseteq X$ is said to be \emph{stable} for $\mathcal{B}$ if $S$ does not contain an element of $\mathcal{B}$. For $i=2,3$, define $\lrg{A}_i$ to be the collection consisting of all maximal stable sets of $\sml{A}_i$ and let $\mathcal{A}_i:=\sml{A}_i\cup \lrg{A}_i$. Note that every element of $\lrg{A}_i$ contains $H$. It is clear that $\mathcal{A}_i$ is an antichain for $i=2,3$. Moreover, $\mathcal{A}_i$ is saturated since every set $A\in \mathcal{P}(X)$ either contains an element of $\sml{A}_i$ or is contained in an element of $\lrg{A}_i$.

One can easily verify that $\left(\sml{A}_i\right)_{i=0}^{5}$ is layered. Therefore, by Lemma~\ref{layer}, $\left(\mathcal{A}_i\right)_{i=0}^{5}$ is a layered sequence of pairwise disjoint saturated antichains. By Lemma~\ref{cont}, $\mathcal{F}:=\bigcup_{i=0}^5\mathcal{A}_i$ is a saturated 6-Sperner system.
Also,
\[\left|\sml{F}\right| = \sum_{i=0}^5 \left|\sml{A}_i\right| = (1+5+9+0)=15,\text{ and}\]
\[\left|\lrg{F}\right|=\sum_{i=0}^5 \left|\lrg{A}_i\right|= (0+9+5+1) = 15,\]
as desired. 
\end{proof}

We remark that the construction in Proposition~\ref{k=6} is similar to one which was used in~\cite{gerb} to prove that $\sat(k,k)\leq \frac{15}{16}2^{k-1}$ for every $k\geq6$. 

For the proof of Theorem~\ref{epsilon} we require that 
\begin{equation}\label{2sat}\sat(k)\leq 2\sat(k-1).\end{equation}
This was proved in~\cite{gerb} using the fact that if $\mathcal{F}\subseteq\mathcal{P}(X)$ is a saturated $(k-1)$-Sperner system and $y\notin X$, then $\mathcal{F}\cup\{A\cup\{y\}: A\in\mathcal{F}\}$
is a saturated $k$-Sperner system in $\mathcal{P}(X\cup\{y\})$. 

\begin{proof}[Proof of Theorem~\ref{epsilon}]
First, we prove that the result holds when $k$ is of the form $4j+2$ for some $j\geq1$. In this case, we repeatedly apply Lemma~\ref{comblem} and  Proposition~\ref{k=6} to obtain a saturated $k$-Sperner system $\mathcal{F}$ on an arbitrarily large ground set $X$ such that
\[\left|\sml{F}\right| + \left|\lrg{F}\right| = 15^j + 15^j = 2\cdot15^j.\]
Therefore, if $k=4j + 2$, then $\sat(k)\leq 2\cdot15^j$. 

For $k$ of the form $4j+2+s$ for $j\geq1$ and $1\leq s\leq 3$, apply (\ref{2sat}) to obtain $\sat(k)\leq 2^s\sat(4j+2)\leq 2^{s+1}\cdot15^j$. Thus, we are done by setting $\varepsilon$ slightly smaller than $\left(1 - \frac{\log_{2}\left(15\right)}{4}\right)$.
\end{proof}

\subsection{Bounding \texorpdfstring{$\boldsymbol{\sat(k)}$}{sat(k)} From Below}

One can easily deduce from the proof of Theorem~\ref{epsilon} that $\sat(k) < 2^{k-1}$ for all $k\geq 6$. For completeness, we prove that $\sat(k)=2^{k-1}$ for $k\leq 5$. 

\begin{prop}
\label{k<=5}
If $k\leq 5$, then $\sat(k)=2^{k-1}$. 
\end{prop}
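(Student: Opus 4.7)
The upper bound $\sat(k) \leq 2^{k-1}$ is immediate from Construction~\ref{powerset}, so my task reduces to establishing $\sat(k) \geq 2^{k-1}$ for $k \leq 5$. The cases $k \leq 3$ were handled by Gerbner et al.~\cite{gerb} via short direct arguments (for instance, a saturated $2$-Sperner system of size $1$ cannot contain a $3$-chain upon adding any set, so $\sat(2) \geq 2$, and similarly for $k=3$), so I focus on $k \in \{4,5\}$.

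Let $\mathcal{F} \subseteq \mathcal{P}(X)$ be a saturated $k$-Sperner system. By Lemma~\ref{satk}, I may take $|X|$ large enough to apply Claim~\ref{bighom}, so $\mathcal{F}$ has a homogeneous set $H$. Lemma~\ref{satur} then gives the canonical decomposition $(\mathcal{A}_i)_{i=0}^{k-1}$, consisting of saturated antichains. The sequence $(\mathcal{A}_i)$ is layered by definition, and by Lemma~\ref{layer} the sequence $(\sml{A}_i)$ is layered as well, so the indices $i$ with $\sml{A}_i \neq \emptyset$ form an initial segment. Symmetrically, since every $A \in \lrg{A}_i$ extends by Lemma~\ref{up} to some $B \supsetneq A$ in $\mathcal{A}_{i+1}$ which must also be large (as $H \subseteq A \subseteq B$), the indices $i$ with $\lrg{A}_i \neq \emptyset$ form a final segment. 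These two segments cover all of $\{0,\dots,k-1\}$ since every layer $\mathcal{A}_i$ is nonempty (a $k$-chain in $\mathcal{F}$ exists by saturation and has exactly one representative in each layer).

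The heart of the argument is to prove the layer bound $|\mathcal{A}_i| \geq \binom{k-1}{i}$ for every $0 \leq i \leq k-1$, which would yield $|\mathcal{F}| = \sum_i |\mathcal{A}_i| \geq \sum_i \binom{k-1}{i} = 2^{k-1}$. These binomial values are exactly the layer sizes realized by Construction~\ref{powerset}, so the bound is tight. To establish the layer bound I would apply Lemma~\ref{thisorthat} repeatedly: since every $S \in \mathcal{P}(X) \setminus \mathcal{A}_i$ either contains a small set of $\mathcal{A}_i$ or is contained in a large set of $\mathcal{A}_i$, the small and large parts of successive layers must grow by enough to maintain the saturation of each $\mathcal{A}_i$, with the binomial count emerging from tracking these covering constraints together with the layered structure linking $\mathcal{A}_i$ to $\mathcal{A}_{i+1}$.

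The main obstacle is verifying the layer bound $|\mathcal{A}_i| \geq \binom{k-1}{i}$ for $k \in \{4,5\}$. Since the totals $2^{k-1}$ are only $8$ and $16$, a finite case analysis on possible size sequences $(|\mathcal{A}_0|, \dots, |\mathcal{A}_{k-1}|)$ summing to less than $2^{k-1}$ is tractable: in each subcritical case I would use Lemma~\ref{thisorthat} to exhibit an explicit $S \subseteq X$ whose addition to $\mathcal{F}$ fails to produce a $(k+1)$-chain, contradicting saturation. Proposition~\ref{k=6} shows that this binomial layer bound fails for $k \geq 6$ (the middle layer sizes can be shaved), which is precisely why the argument does not extend beyond $k = 5$ and aligns with the fact that Conjecture~\ref{mainconj} first breaks at $k = 6$.
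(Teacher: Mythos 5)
Your setup matches the paper's strategy exactly: pass to large $|X|$ via Lemma~\ref{satk}, obtain a homogeneous set $H$ via Claim~\ref{bighom}, take the canonical decomposition $(\mathcal{A}_i)_{i=0}^{k-1}$ into saturated antichains via Lemma~\ref{satur}, and aim for the layer bound $|\mathcal{A}_i| \geq \binom{k-1}{i}$. The observation that $\sml{A}_i$-nonempty indices form an initial segment and $\lrg{A}_i$-nonempty indices a final segment is correct (and a nice structural remark), but it plays no role in closing the argument.

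The genuine gap is that the layer bound itself is never proved. You assert that ``a finite case analysis on possible size sequences \dots is tractable'' and that ``in each subcritical case I would use Lemma~\ref{thisorthat} to exhibit an explicit $S$,'' but this is a plan, not a proof; the combinatorial content lives precisely there. In the paper, three ingredients do the real work and none of them appear in your sketch. First, Lemma~\ref{up} applied to the layered sequence yields the cardinality constraint that every element of $\mathcal{A}_i$ has size between $i$ and $n-k+i+1$; this constraint is essential because it forbids very small sets from sitting in high layers and vice versa, and without it the covering argument does not get off the ground. Second, by replacing $\mathcal{F}$ with $\{X\setminus A : A \in \mathcal{F}\}$ one reduces to $i < k/2$, so for $k\leq 5$ one only needs $i=0,1,2$; without this reduction the case count is larger and, more importantly, the $\mathcal{A}_2$ argument is carried out once and then doubled by symmetry. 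Third, the actual bounds require specific constructions: for $i=1$ one fixes the \emph{largest} set $A \in \lrg{A}_1$, uses the size constraint to find $\geq k-2$ elements $y \notin A$, and perturbs $A$ to $(A\setminus\{x\})\cup\{y\}$ to force $k-2$ distinct further sets of $\mathcal{A}_1$; for $i=2$, $k=5$ one first shows (Claim~\ref{conty}) that every $y \notin H$ lies in some large set of $\mathcal{A}_2$, then shows $|\lrg{A}_2|\geq 3$ by a pigeonhole argument on two elements from each side of a purported two-set cover, and finishes by the complementation symmetry. None of these steps is routine, and ``repeatedly applying Lemma~\ref{thisorthat}'' does not automatically produce the binomial count. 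As written, your proof stops at the point where the paper's actually begins.

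Your closing remark, that Proposition~\ref{k=6} shows the layer bound fails for $k\geq 6$, is a correct and apt sanity check, but it does not substitute for verifying the bound when $k\leq 5$.
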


\begin{proof}
Fix $k\leq 5$. The upper bound follows from Construction~\ref{powerset}, and so it suffices to prove that $\sat(k)\geq2^{k-1}$. Let $X$ be a set with $n:=|X|> 2^{2^{k-1}}$ and let $\mathcal{F}\subseteq\mathcal{P}(X)$ be a saturated $k$-Sperner system of minimum order. By Claim~\ref{bighom} and the fact that $|X|>2^{2^{k-1}}\geq2^{|\mathcal{F}|}$, there is a homogeneous set $H$ for $\mathcal{F}$. 

Let $\left(\mathcal{A}_i\right)_{i=0}^{k-1}$ be the canonical decomposition of $\mathcal{F}$. By Lemma~\ref{satur},  we get that $\mathcal{A}_i$ is a saturated antichain for each $i$. Also, since $\left(\mathcal{A}_i\right)_{i=0}^{k-1}$ is layered, by Lemma~\ref{up} we see that 
\begin{equation}\label{between}\text{every element of $\mathcal{A}_i$ has cardinality between $i$ and $n-k+i+1$.} \end{equation}
Our goal is to to show that for $k\leq5$, every saturated antichain $\mathcal{A}_i$ which satisfies (\ref{between}) must contain at least $\binom{k-1}{i}$ elements. Clearly this is enough to complete the proof of the proposition. Note that it suffices to prove this for $i<\frac{k}{2}$ since $\{X\setminus A: A\in\mathcal{A}_i\}$ is a saturated antichain in which every set has size between $k-i-1$ and $n-i$. Since $k\leq5$, this means that we need only check the cases $i=0,1,2$. In the case $i=0$, we obtain $|\mathcal{A}_0|\geq 1 = \binom{k-1}{0}$ trivially. 

Next, consider the case $i=1$. Let $A$ be the largest set in $\mathcal{A}_1$ such that $H\subseteq A$.  Then, by (\ref{between}), we must have $|A|\leq n-k+2$ and so $|X\setminus A|\geq k-2$. Fix an element $x$ of $H$ and, for each $y\in X\setminus A$, define $A_y:=(A\setminus\{x\})\cup\{y\}$. Since $\mathcal{A}_1$ is saturated, $H$ is homogeneous for $\mathcal{F}$,  and $A$ is the largest set in $\mathcal{A}_1$ containing $H$, there must be a set $B_y\in\mathcal{A}_1$ such that $B_y\subsetneq A_y$. However, since $\mathcal{A}_1$ is an antichain, $B_y\nsubseteq A$, and so $B_y\setminus A = \{y\}$. In particular, $B_y\neq B_{y'}$ for $y\neq y'$. Therefore, $|\mathcal{A}_1|\geq |\{A\}\cup\{B_y:y\in X\setminus A\}| \geq 1+|X\setminus A| \geq k-1 = \binom{k-1}{1}$, as desired. 

Thus, we are finished except for the case $i=2$ and $k=5$. Suppose to the contrary that $|\mathcal{A}_2|<\binom{4}{2}=6$. We begin by proving the following claim.

\begin{claim}
\label{conty}
For every vertex $y\in X\setminus H$, there is a set $S_y\in\lrg{A}_2$ containing $y$.
\end{claim}

\begin{proof}
Let $x\in H$ be arbitrary and consider the set $T:=\{x,y\}$. Then $T$ is not contained in $\mathcal{A}_2$ since $H$ is homogeneous for $\mathcal{F}$. Also, no strict subset of $T$ is in $\mathcal{A}_2$ by (\ref{between}). Since $\mathcal{A}_2$ is saturated, there must be some $S_y\in\lrg{A}_2$ containing $T$, which completes the proof.
\end{proof}

Let us argue that $\left|\lrg{A}_2\right|\geq 3$. By (\ref{between}), each set $A\in\lrg{A}_2$ has at most $n-2$ elements. So, by Claim~\ref{conty}, if $\left|\lrg{A}_2\right|<3$, then it must be the case that $\lrg{A}_2 = \{A_1,A_2\}$ where $A_1\cup A_2=X$. Therefore, since each of $|A_1|$ and $|A_2|$ is at most $n-2$, we can pick $\{w_1,w_2\}\subseteq A_1\setminus A_2$ and $\{z_1,z_2\}\subseteq A_2\setminus A_1$. Given $x\in H$ and $1\leq i,j\leq 2$, we have that $\{x,w_i,z_j\}\notin \mathcal{A}_2$ since $H$ is homogeneous for $\mathcal{F}$. Note that $\{x,w_i,z_j\}$ is not contained in either $A_1$ or $A_2$, and so by Lemma~\ref{thisorthat} and (\ref{between}) we must have $\{w_i,z_j\}\in\mathcal{A}_2$. However, this implies that $|\mathcal{A}_2|\geq |\{\{w_i,z_j\}:1\leq i,j\leq 2\}\cup\{A_1,A_2\}|=6$, a contradiction.

So, we get that $\left|\lrg{A}_2\right|\geq 3$. Note that $\{X\setminus A: A\in\mathcal{A}_2\}$ is also a saturated antichain in which every set has cardinality between $2$ and $n-2$. Thus, we can apply the argument of the previous paragraph to obtain $\left|\sml{A}_2\right|\geq 3$. Therefore, $\left|\mathcal{A}_2\right|=\left|\sml{A}_2\right| + \left|\lrg{A}_2\right| \geq 6$, which completes the proof.
\end{proof}

It is possible that a similar approach may prove fruitful for improving the lower bound on $\sat(k)$ from $2^{k/2-1}$ to $2^{(1+o(1))ck}$ for some $c>1/2$. That is, one may first decompose a saturated $k$-Sperner system $\mathcal{F}\subseteq\mathcal{P}(X)$ of minimum size into its canonical decomposition $\left(\mathcal{A}_i\right)_{i=0}^{k-1}$ and then bound the size of $|\mathcal{A}_i|$ for each $i$ individually. Since there are only $k$ antichains in the decomposition and the bound on $|\mathcal{F}|$ that we are aiming for is exponential in $k$, one could obtain a fairly tight lower bound on $\sat(k)$ by focusing on a single antichain of the decomposition. Setting $i=\left\lfloor\frac{k}{2}\right\rfloor$ in (\ref{between}), we see that it would be sufficient to prove that there exists $c>1/2$ such that every saturated antichain $\mathcal{A}$ with a homogeneous set such that every element of $\mathcal{A}$ has cardinality between $\left\lfloor\frac{k}{2}\right\rfloor$ and $n-\left\lceil\frac{k}{2}\right\rceil+1$ must satisfy $|\mathcal{A}|\geq 2^{(1+o(1))ck}$. The problem of determining whether such a $c$ exists is interesting in its own right. 

\subsection{Asymptotic Behaviour of \texorpdfstring{$\boldsymbol{\sat(k)}$}{sat(k)}}
\label{asymp}

To prove Theorem~\ref{submult}, we require the following fact, which is proved in~\cite{gerb}.

\begin{lem}[Gerbner et al.~\cite{gerb}]
\label{emptyfull}
For any $n\geq k\geq1$ and set $X$ with $|X|=n$ there is a saturated $k$-Sperner system $\mathcal{F}\subseteq\mathcal{P}(X)$ such that $|\mathcal{F}|=\sat(n,k)$ and $\{\emptyset,X\}\subseteq \mathcal{F}$. 
\end{lem}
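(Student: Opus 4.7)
The plan is to start from any saturated $k$-Sperner system $\mathcal{F}\subseteq\mathcal{P}(X)$ with $|\mathcal{F}|=\sat(n,k)$ and modify it in two symmetric steps: first to place $\emptyset$ in $\mathcal{F}$, then to place $X$ in $\mathcal{F}$. Each step preserves saturation and cardinality, and because the second step only alters maximal elements it leaves $\emptyset$ in place.

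Assume $\emptyset\notin\mathcal{F}$. Let $\mathcal{A}_0$ denote the minimal elements of $\mathcal{F}$ and let $\mathcal{A}_0^\star\subseteq\mathcal{A}_0$ consist of those $A$ which are the bottom of some $k$-chain in $\mathcal{F}$. Since $\mathcal{F}$ is $k$-Sperner, the bottom of any $k$-chain in $\mathcal{F}$ must itself be minimal (else extending downward would yield a $(k+1)$-chain), so every $k$-chain in $\mathcal{F}$ begins inside $\mathcal{A}_0^\star$; saturation of $\mathcal{F}$ applied to $\emptyset$ furnishes at least one $k$-chain, so $\mathcal{A}_0^\star\neq\emptyset$. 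I then set
\[\mathcal{F}':=(\mathcal{F}\setminus\mathcal{A}_0^\star)\cup\{\emptyset\}.\]
The heart of the argument is to prove that $\mathcal{F}'$ is again a saturated $k$-Sperner system; granted this, the identity $|\mathcal{F}'|=|\mathcal{F}|-|\mathcal{A}_0^\star|+1$ together with the lower bound $|\mathcal{F}'|\geq\sat(n,k)=|\mathcal{F}|$ forces $|\mathcal{A}_0^\star|=1$ and $|\mathcal{F}'|=\sat(n,k)$.

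The $k$-Sperner property of $\mathcal{F}'$ is immediate: $\mathcal{F}\setminus\mathcal{A}_0^\star$ contains no $k$-chain by the definition of $\mathcal{A}_0^\star$, so any $(k+1)$-chain in $\mathcal{F}'$ would have to include $\emptyset$ and produce a forbidden $k$-chain in $\mathcal{F}\setminus\mathcal{A}_0^\star$. For saturation, given $T\in\mathcal{P}(X)\setminus\mathcal{F}'$ I distinguish cases: if $T\in\mathcal{A}_0^\star$, I prepend $\emptyset$ to a $k$-chain of $\mathcal{F}$ starting at $T$; if $T\notin\mathcal{F}\cup\{\emptyset\}$, I use saturation of $\mathcal{F}$ to get a $(k+1)$-chain through $T$ in $\mathcal{F}\cup\{T\}$, then extend it downward inside $\mathcal{F}$ and truncate a non-$T$ element if necessary, so the resulting $(k+1)$-chain has its minimum $A$ in $\mathcal{A}_0$ and still contains $T$; if $A\in\mathcal{A}_0^\star$ I swap $A$ for $\emptyset$, which is legal because every other element of the chain strictly contains $A\in\mathcal{A}_0$ and hence lies in $\mathcal{F}\setminus\mathcal{A}_0\subseteq\mathcal{F}'$. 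Finally, applying the dual argument to $\mathcal{F}'$ (with maximal elements topping $k$-chains and with $X$ in place of $\emptyset$) produces $\mathcal{F}''$ of cardinality $\sat(n,k)$ with $\{\emptyset,X\}\subseteq\mathcal{F}''$.

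The main obstacle is the saturation check for $\mathcal{F}'$ in the case $T\notin\mathcal{F}\cup\{\emptyset\}$: managing the downward extension, the truncation, and the swap while keeping $T$ inside the chain and landing in $\mathcal{F}'\cup\{T\}$. The observation that makes the swap work is that once the minimum of the chain lies in $\mathcal{A}_0$, every other element strictly contains a minimal element of $\mathcal{F}$ and hence automatically lies outside $\mathcal{A}_0^\star$, i.e., in $\mathcal{F}'$.
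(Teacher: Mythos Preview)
There is a gap in your saturation argument for $\mathcal{F}'$ in the sub-case $T\notin\mathcal{F}\cup\{\emptyset\}$: you assert that after extending downward inside $\mathcal{F}$ and truncating, the resulting $(k+1)$-chain has its minimum $A$ in $\mathcal{A}_0$. But suppose the original $(k+1)$-chain already has $T$ at the bottom, say $T=C_0\subsetneq C_1\subsetneq\cdots\subsetneq C_k$ with $C_1,\dots,C_k\in\mathcal{F}$. Then $C_1,\dots,C_k$ is a $k$-chain in $\mathcal{F}$, so $C_1\in\mathcal{A}_0$; consequently no element of $\mathcal{F}$ lies strictly below $T$ (any such element would lie strictly below $C_1$, contradicting minimality of $C_1$). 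You therefore cannot extend downward at all, the minimum of the chain remains $T\notin\mathcal{A}_0$, and since $C_1\in\mathcal{A}_0^\star$ your chain does not lie in $\mathcal{F}'\cup\{T\}$. The repair is immediate: in this case swap $C_1$ (rather than the chain's minimum) for $\emptyset$, obtaining the $(k+1)$-chain $\emptyset\subsetneq T\subsetneq C_2\subsetneq\cdots\subsetneq C_k$, which does lie in $\mathcal{F}'\cup\{T\}$ because each $C_i$ with $i\geq 2$ strictly contains $C_1\in\mathcal{A}_0$.

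For comparison, the paper's proof is shorter and takes a different route: it uses the full canonical decomposition $(\mathcal{A}_i)_{i=0}^{k-1}$ and in a single step replaces \emph{all} of $\mathcal{A}_0\cup\mathcal{A}_{k-1}$ by $\{\emptyset,X\}$, then asserts (without further detail) that the result is a saturated $k$-Sperner system of size at most $|\mathcal{F}|$. Your two-step approach via $\mathcal{A}_0^\star$ is a legitimate alternative once patched, and it yields the extra fact $|\mathcal{A}_0^\star|=1$, but it is more elaborate than what the lemma requires.
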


\begin{proof}
Let $\mathcal{F}\subseteq\mathcal{P}(X)$ be a saturated $k$-Sperner system such that $|\mathcal{F}|=\sat(n,k)$. We let $\left(\mathcal{A}_i\right)_{i=0}^{k-1}$ denote the canonical decomposition of $\mathcal{F}$ and define
\[\mathcal{F}':=\left(\mathcal{F} \setminus \left(\mathcal{A}_0\cup\mathcal{A}_{k-1}\right)\right)\cup\{\emptyset,X\}.\]
It is clear that $\mathcal{F}'\subseteq \mathcal{P}(X)$ is a saturated $k$-Sperner system and $|\mathcal{F}'|\leq |\mathcal{F}|=\sat(n,k)$, which proves the result. 
\end{proof}

\begin{proof}[Proof of Theorem~\ref{submult}]
We show that, for all $k,\ell$,
\begin{equation}\label{4sat}\sat(k+\ell)\leq 4\sat(k)\sat(\ell).\end{equation}
Letting $f(k) := 4\sat(k)$, we see that (\ref{4sat}) implies that $f(k+\ell)\leq f(k)f(\ell)$ for every $k,\ell$. It follows by Fekete's Lemma that $f(k)^{1/k}$ converges, and so $\sat(k)^{1/k}$ converges as well.

For $n>2^{2^{k+\ell-2}}$, let $X$ and $Y$ be disjoint sets of size $n$ and let $\mathcal{F}_k\subseteq\mathcal{P}(X)$ and $\mathcal{F}_\ell\subseteq\mathcal{P}(Y)$ be saturated $k$-Sperner and $\ell$-Sperner systems of cardinalities $\sat(k)$ and $\sat(\ell)$, respectively. By Claim~\ref{bighom}, we can assume that $\mathcal{F}_k$ and $\mathcal{F}_\ell$ have homogeneous sets and, by Lemma~\ref{emptyfull}, we can assume that $\{\emptyset,X\}\subseteq\mathcal{F}_k$ and $\{\emptyset,Y\}\subseteq\mathcal{F}_\ell$. We apply Lemma~\ref{comblem} and Remark~\ref{sizes} to obtain a saturated $(k+\ell-2)$-Sperner system $\mathcal{G}\subseteq \mathcal{P}(X \cup Y)$ of order at most $|\mathcal{F}_k||\mathcal{F}_\ell|=\sat(k)\sat(\ell)$. Therefore, by (\ref{2sat}), we have
\[\sat(k+\ell)\leq 4\sat(k+\ell-2)\leq 4|\mathcal{G}| \le 4\sat(k)\sat(\ell)\]
as required. 
\end{proof}

\section{Oversaturated \texorpdfstring{$\boldsymbol{k}$}{k}-Sperner Systems}\label{wsat}

In this section we construct oversaturated $k$-Sperner systems of small order. We first state a lemma, from which Theorem~\ref{wsatb} follows, and then prove the lemma itself. 

\begin{lem}\label{fun}
Given $k\geq 1$, let $X$ be a set of cardinality $k^2 + k$. Then for all $t$ such that $1 \le t \le k^2+k$ there exist non-empty collections $\mathcal{F}_t$, $\mathcal{G}_t \subseteq \mathcal{P}(X)$ that have the following properties:
\begin{enumerate}[(a)]
\item For every $F \in \mathcal{F}_t$ and $G \in \mathcal{G}_t$, $|F| + |G| \ge k$,\label{sumk} 
\item $|\mathcal{F}_t| + |\mathcal{G}_t| = O\left(k^2 2^{k/2}\right)$,\label{nottoomany}
\item For every $S \subseteq X$ such that $|S|=t$, there exists some $F \in \mathcal{F}_t$ and some $G \in \mathcal{G}_t$ such that $F \subsetneq S$ and $G \cap S = \emptyset$. \label{contained}
\end{enumerate}
\end{lem}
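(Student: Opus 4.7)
My plan is to build $\mathcal{F}_t$ and $\mathcal{G}_t$ probabilistically, with every set in $\mathcal{F}_t$ of a common size $r=r(t)\in\{0,\ldots,k\}$ and every set in $\mathcal{G}_t$ of size $k-r$. This makes (a) automatic since $|F|+|G|=k$ for all $F\in\mathcal{F}_t$, $G\in\mathcal{G}_t$. Writing $n=k^2+k$, $p=t/n$, $q=1-p$, consider $\rho_F(r):=\binom{n}{r}/\binom{t}{r}$ (the reciprocal of the probability that a uniformly random $r$-subset of $X$ lies inside a given $t$-set) and analogously $\rho_G(r):=\binom{n}{k-r}/\binom{n-t}{k-r}$. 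The plan is to choose $r$ so that both $\rho_F(r)$ and $\rho_G(r)$ are $O(2^{k/2})$, then sample $O(k^2\cdot 2^{k/2})$ subsets of the appropriate size to cover all $t$-sets.

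The key step is showing that such an $r$ exists. If $p\le 1-2^{-1/2}$, take $r=0$ (so $\mathcal{F}_t=\{\emptyset\}$, which trivially contains any non-empty $S$): then $\rho_F(0)=1$ and $\rho_G(0)\approx(1/q)^k\le 2^{k/2}$, with the approximation $\rho_G(0)=\binom{n}{k}/\binom{n-t}{k}\le(n/(n-t-k+1))^k$ controlled since $n-t\gg k$ in this regime. The case $p\ge 2^{-1/2}$ is symmetric, with $r=k$ and $\mathcal{G}_t=\{\emptyset\}$. In the central regime $p\in(1-2^{-1/2},\,2^{-1/2})$, both $t$ and $n-t$ are of order $k^2$, which lets me use the estimates $\rho_F(r)=(1+o(1))(1/p)^r$ and $\rho_G(r)=(1+o(1))(1/q)^{k-r}$ (valid since $r\le k$ implies $r^2/t=O(1)$ here). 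I then need an integer $r\in\{1,\ldots,k-1\}$ satisfying $r\log_2(1/p)\le k/2$ and $(k-r)\log_2(1/q)\le k/2$. The feasibility of this system is equivalent to the inequality $\frac{1}{\log_2(1/p)}+\frac{1}{\log_2(1/q)}\ge 2$, which holds for all $p\in(0,1)$ with equality only at $p=1/2$ (the LHS is symmetric in $p\leftrightarrow q$ and attains its minimum $2$ at $p=1/2$ by a short convexity argument). Integer rounding then costs at most a bounded multiplicative factor.

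Once $r$ is fixed, I would form $\mathcal{F}_t$ by sampling $N=\lceil C k^2\cdot 2^{k/2}\rceil$ independent uniformly random $r$-subsets of $X$ with replacement, for a large absolute constant $C$. For any fixed $t$-set $S$, the probability that none of the samples is contained in $S$ is at most $\exp(-N/\rho_F(r))\le\exp(-C'k^2)$, so union-bounding over the $\binom{n}{t}\le 2^{k^2+k}$ choices of $S$ gives a failure probability strictly less than $1/2$ for $C$ large enough. The analogous argument produces $\mathcal{G}_t$ with $|\mathcal{G}_t|\le N$. Taking the intersection of the two good events yields collections satisfying (c), and the bound $|\mathcal{F}_t|+|\mathcal{G}_t|\le 2N=O(k^2\cdot 2^{k/2})$ gives (b). The strict containment $F\subsetneq S$ in (c) follows because $|F|=r\le k<t$ whenever $t>k$; in the boundary regime $t\le k$ one uses $r=0$, so $F=\emptyset\subsetneq S$ since $t\ge 1$.

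The main obstacle is the central-regime analysis in the second paragraph: quantifying the error terms in the estimates $\rho_F(r)\approx(1/p)^r$ and $\rho_G(r)\approx(1/q)^{k-r}$, invoking the entropy-type inequality, and verifying that integer rounding introduces only an $O(1)$ multiplicative loss. The small-$t$ and large-$t$ boundary regimes need separate but routine treatment, where the singleton family $\{\emptyset\}$ does all the work on one side and a single-size covering argument handles the other.
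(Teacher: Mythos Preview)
Your proposal is correct and follows essentially the same route as the paper: both take $\mathcal{F}_t$ and $\mathcal{G}_t$ to be random collections of subsets of fixed sizes $r$ and $k-r$ (making (a) automatic), handle extreme values of $p=t/n$ by letting one of the two families be $\{\emptyset\}$, and in the central regime select $r$ via exactly the inequality $\tfrac{1}{\log_2(1/p)}+\tfrac{1}{\log_2(1/q)}\ge 2$, which is the paper's key estimate (\ref{abound}) as well. The only cosmetic differences are the boundary cutoff (the paper uses $p=1/8$ rather than your $1-2^{-1/2}$) and that the paper fixes $r=ak=\lfloor k/(2\log_2(1/p))+1\rfloor$ explicitly instead of phrasing it as a feasibility interval.
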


We apply Lemma~\ref{fun} to prove Theorem~\ref{wsatb}.

\begin{proof}[Proof of Theorem~\ref{wsatb}]
First, let $X$ be a set of cardinality $k^2+k$. For $t\in\{1,\dots,k^2+k\}$, let $\mathcal{F}_t$ and $\mathcal{G}_t$ be as in Lemma~\ref{fun}. For each $F \in \mathcal{F}_t\cup\mathcal{G}_t$, choose $F_1,\dots, F_i\in\mathcal{P}(X)$ such that
\[F_1\subsetneq \dots\subsetneq F_i\subsetneq F\]
where $i:=\min\{k-1, |F|\}$. We let $\mathcal{C}_F:=F \cup\{F_1,\dots,F_i\}$ and define
$$\mathcal{G} := \bigcup_{1 \le t \le k^2 + k} \left( \{T:T \in \mathcal{C}_F \text{ for some } F \in \mathcal{F}_t\} \cup \{X\setminus T :T \in \mathcal{C}_G \text{ for some } G \in \mathcal{G}_t\} \right).$$ \\
For each $t\leq k^2+k$ and $F \in \mathcal{F}_t\cup\mathcal{G}_t$, we have $|\mathcal{C}_F|\leq k$. Thus, by Property (\ref{nottoomany}) of Lemma~\ref{fun}, 
$$|\mathcal{G}| \le \sum_{t=1}^{k^2+k}k(|\mathcal{F}_t|+|\mathcal{G}_t|) = O\left(k^52^{k/2}\right).$$

We will now show that for any $S \in \mathcal{P}(X) \setminus \mathcal{G}$ there is a $(k+1)$-chain in $\mathcal{G}\cup\{S\}$ containing $S$, which will imply that $\mathcal{G}$ is an oversaturated $k$-Sperner system. Let $S \subseteq X$ and define $t:=|S|$. By Property (\ref{contained}) of Lemma~\ref{fun}, there exists $F \in \mathcal{F}_t$ such that $F \subsetneq S$ and $G \in \mathcal{G}_t$ such that $G \cap S = \emptyset$. This implies that $S \subsetneq X\setminus G$. By Property (\ref{sumk}) of Lemma~\ref{fun} we get that 
\[\mathcal{C}_F\cup\{X\setminus T: T\in\mathcal{C}_G\}\cup\{S\}\]
contains a $(k+1)$-chain in $\mathcal{G}\cup\{S\}$ containing $S$.

Now, suppose that $|X|> k^2+k$. Let $Y\subseteq X$ such that $|Y|=k^2+k$ and define $H:=X\setminus Y$. As above, let $\mathcal{G}\subseteq\mathcal{P}(Y)$ be an oversaturated $k$-Sperner system of cardinality at most $O\left(k^52^{k/2}\right)$. Define $\mathcal{G'} \subseteq \mathcal{P}(X)$ as follows:
$$\mathcal{G'} := \{T : T \in \mathcal{G}\} \cup \{T\cup H: T \in \mathcal{G}\}.$$
Consider any set $S \in \mathcal{P}(X)\setminus \mathcal{G'}$. Let $S' = S \cap Y$. We have, by definition of $\mathcal{G}$, that there is a $(k+1)$-chain $\mathcal{C}$ in $\mathcal{G}\cup\{S'\}$ containing $S'$. Adding $H$ to every superset of $S'$ in $\mathcal{C}$ and replacing $S'$ by $S$ in $\mathcal{C}$ gives us a $(k+1)$-chain in $\mathcal{G}'\cup\{S\}$ containing $S$. The result follows.
\end{proof}

To prove Lemma~\ref{fun}, we use a probabilistic approach.

\begin{proof}[Proof of Lemma~\ref{fun}]
Throughout the proof, we assume that $k$ is sufficiently large and let $X$ be a set of cardinality $k^2+k$. Let $1\leq t\leq k^2+k$ be given. We can assume that $t\leq \frac{k^2+k}{2}$ since, otherwise, we can simply define $\mathcal{F}_t:=\mathcal{G}_{k^2+k-t}$ and $\mathcal{G}_t:=\mathcal{F}_{k^2+k-t}$. We divide the proof into two cases depending on the size of $t$.

\begin{case}
$t\leq \frac{k^2+k}{8}$.
\end{case}

We define $\mathcal{F}_t:=\{\emptyset\}$ and let $\mathcal{G}_t$ be a uniformly random collection of $2^{k/2}$ subsets of $X$, each of cardinality $k$. Given $S \subseteq X$ of cardinality $t$, the probability that $S$ is not disjoint from any set of $\mathcal{G}_t$ is 
\[\left(1-\prod_{i=0}^{k-1}\left(\frac{k^2 + k -t-i}{k^2+k-i}\right)\right)^{2^{k/2}}\leq \left(1-\left(\frac{k^2 -t}{k^2}\right)^k\right)^{2^{k/2}} \leq \left(1-\left(\frac{7}{8}-\frac{1}{8k}\right)^k\right)^{2^{k/2}}\]
\[\leq e^{-\left(\frac{7}{8} -\frac{1}{8k}\right)^k2^{k/2}}< e^{-(1.1)^k}.\]
Therefore, the expected number of subsets of $X$ of cardinality $t$ which are not disjoint from any set of $\mathcal{G}_t$ is at most $\binom{k^2+k}{t}e^{-(1.1)^k}$,
which is less than $1$. Thus, with non-zero probability, every  $S\subseteq X$ of cardinality $t$ is disjoint from some set in $\mathcal{G}_t$. 

\begin{case}
$\frac{k^2+k}{8} < t \leq\frac{k^2+k}{2}$.
\end{case}

Define $p:=\frac{t}{k^2+k}$ and let $a$ be the rational number such that $ak = \left\lfloor\frac{-k\log{\sqrt{2}}}{\log(p)} + 1\right\rfloor$. Then, since $\frac{1}{8}\leq p\leq \frac{1}{2}$, we have
\begin{equation}\label{bounds}1/6\leq a\leq 1/2 + 1/k < 4/7.\end{equation}

Now, let $\mathcal{F}_t$ be a collection of $\left\lceil8e^8k^22^{k/2}\right\rceil$ subsets of $X$, each of cardinality $ak$, chosen uniformly at random with replacement. Similarly, let $\mathcal{G}_t$ be a collection of $\left\lceil e^2k^22^{k/2}\right\rceil$ subsets of $X$, each of cardinality $(1-a)k$, chosen uniformly at random with replacement. We show that, with non-zero probability, every $S \subseteq X$ of size $t$ contains a set of $\mathcal{F}_t$ and is disjoint from a set of $\mathcal{G}_t$. 

Given $S\subseteq X$ of size $t=p(k^2+k)$, the probability that $S$ does not contain a set of $\mathcal{F}_t$ is at most
\[\left(1-\prod_{i=0}^{ak-1}\left(\frac{p(k^2+k)-i}{k^2+k-i}\right)\right)^{|\mathcal{F}_t|}\leq \left(1-\left(\frac{p(k^2+k) -k}{k^2}\right)^{ak}\right)^{|\mathcal{F}_t|}\]
\begin{equation}\label{pk}=\left(1-\left(1 - \frac{1-p}{pk}\right)^{ak}p^{ak}\right)^{|\mathcal{F}_t|}.\end{equation}
Observe that $\left(1 - \frac{1-p}{pk}\right) \geq e^{-\frac{2(1-p)}{pk}}$ for large enough $k$. So, $\left(1 - \frac{1-p}{pk}\right)^{ak}\geq e^{\frac{-2a(1-p)}{p}}$ which is at least $e^{-8}$ since $a<4/7$ and $p\geq 1/8$. Thus, the expression in (\ref{pk}) is at most
\[\left(1-e^{-8}p^{ak}\right)^{|\mathcal{F}_t|} \leq e^{-e^{-8}p^{ak}|\mathcal{F}_t|}\leq e^{-e^{-8}p^{ak}\left(8e^8k^22^{k/2}\right)} = e^{-p^{ak}8k^22^{k/2}}.\]
Using our choice of $a$ and the fact that $p\geq 1/8$, we can bound the exponent by
\[p^{ak}8k^22^{k/2} \geq p^{\left(-\frac{\log{\sqrt{2}}}{\log(p)} + \frac{1}{k}\right)k}8k^22^{k/2}= p8k^2\geq k^2.\]
Therefore, the expected number of subsets of $X$ of size $t$ which do not contain a set of $\mathcal{F}_t$ is at most
\[\binom{k^2+k}{t}e^{-k^2} < 2^{k^2+k}e^{-k^2}\]
which is less than $1$. Thus, with positive probability, every subset of $X$ of cardinality $t$ contains a set of $\mathcal{F}_t$. 

The proof that, with positive probability, every set of cardinality $t$ is disjoint from a set of $\mathcal{G}_t$ is similar; we sketch the details. First, let us note that
\begin{equation}\label{abound}a\geq \frac{-\log{\sqrt{2}}}{\log(p)} \geq 1 + \frac{\log{\sqrt{2}}}{\log(1-p)}\end{equation}
since $p\leq 1/2$. For a fixed set $S\subseteq X$ of size $t=p(k^2+k)$, the probability that $S$ is not disjoint from any set of $\mathcal{G}_t$ is at most
\[\left(1-\prod_{i=0}^{(1-a)k-1}\left(\frac{(1-p)(k^2+k)-i}{k^2+k-i}\right)\right)^{|\mathcal{G}_t|}\leq \left(1-\left(\frac{(1-p)(k^2+k) -k}{k^2}\right)^{(1-a)k}\right)^{|\mathcal{G}_t|}\]
\begin{equation}\label{Gtbound}=\left(1-\left(1 - \frac{p}{(1-p)k}\right)^{(1-a)k}(1-p)^{(1-a)k}\right)^{|\mathcal{G}_t|}\end{equation}
Now, $\left(1 - \frac{p}{(1-p)k}\right)\geq e^{\frac{-2p}{(1-p)k}}$ for large enough $k$. So,  $\left(1 - \frac{p}{(1-p)k}\right)^{(1-a)k}\geq e^{\frac{-2(1-a)p}{(1-p)}}$, which is at least $e^{-2}$ since $a\geq 1/6$ and $\frac{1}{8}\leq p\leq \frac{1}{2}$. Therefore, the expression in (\ref{Gtbound}) is at most
\[\left(1-e^{-2}(1-p)^{(1-a)k}\right)^{|\mathcal{G}_t|} \leq e^{-e^{-2}(1-p)^{(1-a)k}|\mathcal{G}_t|}\leq e^{-e^{-2}(1-p)^{(1-a)k}\left(e^2k^22^{k/2}\right)}\]
\[ = e^{-(1-p)^{(1-a)k}k^22^{k/2}}.\]
By (\ref{abound}), we can bound the exponent by
\[(1-p)^{(1-a)k}k^22^{k/2} \geq (1-p)^{\left(\frac{-\log{\sqrt{2}}}{\log(1-p)}\right)k}k^22^{k/2}\geq k^2.\]
As with $\mathcal{F}_t$, we get that the expected number of sets of cardinality $t$ which are not disjoint from a set of $\mathcal{G}_t$ is less than one. The result follows. 
\end{proof}

\subsection*{Acknowledgements}
The first two authors would like to thank Antonio Gir\~{a}o for many stimulating discussions, one of which lead to the discovery of Example~\ref{Cambridge}.



\begin{thebibliography}{10}

\bibitem{completepartite}
T.~Bohman, M.~Fonoberova, and O.~Pikhurko, \emph{The saturation function of
  complete partite graphs}, J. Comb. \textbf{1} (2010), no.~2, 149--170.

\bibitem{BollobasEHM}
B.~Bollob{\'a}s, \emph{On a conjecture of {E}rd{\H o}s, {H}ajnal and {M}oon},
  Amer. Math. Monthly \textbf{74} (1967), 178--179.

\bibitem{weakly}
B.~Bollob{\'a}s, \emph{Weakly {$k$}-saturated graphs}, Beitr\"age zur {G}raphentheorie
  ({K}olloquium, {M}anebach, 1967), Teubner, Leipzig, 1968, pp.~25--31.

\bibitem{WhiteBook}
B.~Bollob{\'a}s, \emph{Combinatorics}, Cambridge University Press, Cambridge, 1986, Set
  systems, hypergraphs, families of vectors and combinatorial probability.

\bibitem{C5}
Y.-C. Chen, \emph{All minimum {$C_5$}-saturated graphs}, J. Graph Theory
  \textbf{67} (2011), no.~1, 9--26.

\bibitem{LittlewoodOfford}
P.~Erd\H{o}s, \emph{On a lemma of {L}ittlewood and {O}fford}, Bull. Amer. Math.
  Soc. \textbf{51} (1945), 898--902.

\bibitem{runif}
P.~Erd{\H{o}}s, Z.~F{\"u}redi, and Z.~Tuza, \emph{Saturated {$r$}-uniform
  hypergraphs}, Discrete Math. \textbf{98} (1991), no.~2, 95--104.

\bibitem{EHM}
P.~Erd{\H{o}}s, A.~Hajnal, and J.~W. Moon, \emph{A problem in graph theory},
  Amer. Math. Monthly \textbf{71} (1964), 1107--1110.

\bibitem{satsurvey}
J.~R. Faudree, R.~J. Faudree, and J.~R. Schmitt, \emph{{A Survey of Minimum
  Saturated Graphs}}, Electron. J. Combin. \textbf{18} (2011), Dynamic Survey
  19, 36 pp. (electronic).
  
 \bibitem{Kst}
W.~Gan, D.~Kor\'{a}ndi, and B.~Sudakov, \emph{{$K_{s,t}$-saturated bipartite
  graphs}}, arXiv: 1402.2471, preprint, February 2014.

\bibitem{gerb}
D.~Gerbner, B.~Keszegh, N.~Lemons, C.~Palmer, D.~P{\'a}lv{\"o}lgyi, and
  B.~Patk{\'o}s, \emph{Saturating {S}perner {F}amilies}, Graphs Combin.
  \textbf{29} (2013), no.~5, 1355--1364.
  
\bibitem{hypercube}
J.~R. Johnson and T.~Pinto, \emph{{Saturated Subgraphs of the Hypercube}},
  arXiv:1406.1766, preprint, June 2014.

\bibitem{KaszTuz}
L.~K{\'a}szonyi and Z.~Tuza, \emph{Saturated graphs with minimal number of
  edges}, J. Graph Theory \textbf{10} (1986), no.~2, 203--210.

\bibitem{OlegHyper2}
O.~Pikhurko, \emph{The minimum size of saturated hypergraphs}, Combin. Probab.
  Comput. \textbf{8} (1999), no.~5, 483--492.

\bibitem{rstars}
O.~Pikhurko, \emph{Asymptotic evaluation of the sat-function for {$r$}-stars},
  Discrete Math. \textbf{214} (2000), no.~1-3, 275--278.

\bibitem{OlegHyper}
O.~Pikhurko, \emph{Results and open problems on minimum saturated hypergraphs}, Ars
  Combin. \textbf{72} (2004), 111--127.

\bibitem{Sperner}
E.~Sperner, \emph{Ein {S}atz \"uber {U}ntermengen einer endlichen {M}enge},
  Math. Z. \textbf{27} (1928), no.~1, 544--548.

\bibitem{C4}
Z.~Tuza, \emph{{$C_4$}-saturated graphs of minimum size}, Acta Univ. Carolin.
  Math. Phys. \textbf{30} (1989), no.~2, 161--167, 17th Winter School on
  Abstract Analysis (Srn{\'{\i}}, 1989).

\bibitem{Wessel}
W.~Wessel, \emph{\"{U}ber eine {K}lasse paarer {G}raphen. {I}. {B}eweis einer
  {V}ermutung von {E}rd{\H o}s, {H}ajnal und {M}oon}, Wiss. Z. Techn. Hochsch.
  Ilmenau \textbf{12} (1966), 253--256.

\bibitem{Wessel2}
W.~Wessel, \emph{\"{U}ber eine {K}lasse paarer {G}raphen. {II}. {B}estimmung der
  {M}inimalgraphen}, Wiss. Z. Techn. Hochsch. Ilmenau \textbf{13} (1967),
  423--426.
  
\bibitem{Zykov}
A.~A. Zykov, \emph{On some properties of linear complexes}, Mat. Sbornik N.S.
  \textbf{24(66)} (1949), 163--188.

\end{thebibliography}
  
\end{document}